\newtheorem{thm}{Theorem}
\newenvironment{thm'}[1]{%
	\manualtheoreminner
}{\endmanualtheoreminner}
\newtheorem{cor}[thm]{Corollary}
\newtheorem{lem}[thm]{Lemma}
\newtheorem{prop}[thm]{Proposition}
\theoremstyle{remark}
\def\cA{\mathcal{A}}
\def\cH{\mathcal{H}}
\def\cI{\mathcal{I}}
\def\cF{\mathcal{F}}
\def\cD{\mathcal{D}}
\def\cC{\mathcal{C}}
\def\cS{\mathcal{S}}
\def\cL{\mathcal{L}}
\def\cE{\mathcal{E}}
\def\cV{\mathcal{V}}
\def\R{\mathbb{R}}
\def\blfootnote{\gdef\@thefnmark{}\@footnotetext}
\begin{document}

\title{On tangencies among planar curves with an application to coloring L-shapes}
\author{
Eyal Ackerman\thanks{Department of Mathematics, Physics and Computer Science,
	University of Haifa at Oranim, 	Tivon 36006, Israel.}\and
Bal\'azs Keszegh\thanks{Alfr\'ed R{\'e}nyi Institute of Mathematics, Hungarian Academy of Sciences and MTA-ELTE Lend\"ulet Combinatorial Geometry Research Group, Institute of Mathematics, E\"otv\"os Lor\'and University, Budapest, Hungary. Research supported by the Lend\"ulet program of the Hungarian Academy of Sciences (MTA), under the grant LP2017-19/2017 and by the National Research, Development and Innovation Office -- NKFIH under the grant K 132696.}
\and D\"om\"ot\"or P\'alv\"olgyi\thanks{MTA-ELTE Lend\"ulet Combinatorial Geometry Research Group, Institute of Mathematics, E\"otv\"os Lor\'and University, Budapest, Hungary. Research supported by the Lend\"ulet program of the Hungarian Academy of Sciences (MTA), under the grant LP2017-19/2017.}
}
\maketitle

\begin{abstract}
We prove that there are $O(n)$ tangencies among any set of $n$ red and blue planar curves in which every pair of curves intersects at most once and no two curves of the same color intersect.
If every pair of curves may intersect more than once, then it is known that the number of tangencies could be super-linear.
However, we show that a  linear upper bound still holds if we replace tangencies by pairwise disjoint connecting curves that all intersect a certain face of the arrangement of red and blue curves.

The latter result has an application for the following problem studied by 
Keller, Rok and Smorodinsky [Disc.\ Comput.\ Geom.\ (2020)] in the context of \emph{conflict-free coloring} of \emph{string graphs}: what is the minimum number of colors that is always sufficient to color the members of any family of $n$ \emph{grounded L-shapes} such that among the L-shapes intersected by any L-shape there is one with a unique color? They showed that $O(\log^3 n)$ colors are always sufficient and that $\Omega(\log n)$ colors are sometimes necessary.
We improve their upper bound to $O(\log^2 n)$. 
\end{abstract}

\section{Introduction}

The \emph{intersection graph} of a collection of geometric shapes is the graph whose vertex set consists of the shapes, and whose edge set consists of pairs of shapes with a non-empty intersection.
Various aspects of such graphs have been studied vastly over the years.
For example, by a celebrated result of Koebe~\cite{Koebe36} planar graphs are exactly the intersection graphs of interior-disjoint disks in the plane.
Another example which is more recent and more related to our topic is a result
by Pawlik et al.~\cite{chibounded}, who showed that intersection graphs of planar segments are not \emph{$\chi$-bounded}, that is, their chromatic number cannot be upper-bounded by a function of their clique number.

We will mainly consider (not necessarily closed) planar curves (Jordan arcs).
A family of curves is \emph{$t$-intersecting} if every pair of curves intersects in at most $t$ points.
We say that two curves \emph{touch} each other at a \emph{touching} (\emph{tangency}) point $p$ if both of them contain $p$ in their interior, $p$ is their only intersection point and it is not a crossing point.\footnote{$p$ is a \emph{crossing point} of two curves if there is a small disk $D$ centered at $p$ which contains no other intersection point of these curves, each curve intersects the boundary of $D$ at exactly two points and in the cyclic order of these four points no two consecutive points belong to the same curve.} 

According to a nice conjecture of Pach \cite{pachpc} the number of tangencies among a $1$-intersecting family $\cS$ of $n$ curves should be $O(n)$ if every pair of curves intersects. 
Gy{\"o}rgyi et al.~\cite{GYORGYI201829} proved this conjecture in the special case where there are constantly many faces of the arrangement of $\cS$ such that every curve in $\cS$ has one of its endpoints inside one of these faces.
Here we prove the following variant.

\begin{thm}\label{thm:touchings}
	Let $\cS$ be a $1$-intersecting set of red and blue curves such that no two curves of the same color intersect.
	Then the number of tangencies among the curves in $\cS$ is $O(n)$.
\end{thm}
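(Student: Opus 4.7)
A natural first attempt would be to show that the bipartite tangency graph $T$, with vertex set $\mathcal{S}$ and edges corresponding to red-blue tangencies, is planar; this would immediately yield $|E(T)|\le 2n-4=O(n)$. One would try to draw $T$ by placing each curve's vertex at an interior point and routing each edge along sub-arcs of the two curves joined at the tangency point, using the pairwise disjointness of same-color curves to keep edges from crossing. However, $T$ can in fact realize $K_{3,3}$: take three pairwise disjoint horizontal red segments together with three pairwise disjoint blue curves, each of which loops around the endpoints of the red segments (which is permissible since the red curves are arcs, not lines) so as to tangentially touch all three reds. Thus $T$ is not planar in general, and a subtler approach is required.

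The plan is instead to reduce the problem to counting lens-shaped faces in a perturbed arrangement. At each tangency point $p$ between a red curve $R$ and a blue curve $B$, I would perturb $B$ very slightly near $p$ so that it crosses $R$ transversally at two new points close to $p$; the perturbations are taken small and local enough that no other curve is affected and that the perturbed features associated with distinct tangencies do not interact. In the resulting arrangement, the same-color curves are still pairwise disjoint, each red-blue pair intersects in $0$, $1$, or $2$ points, and each original tangency becomes a distinct ``lens face'' bounded by one arc of $R$ and one arc of $B$. Consequently, the number of red-blue tangencies equals the number of these lens faces.

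What remains is to bound the number of lens faces by $O(n)$. I would apply a charging or discharging argument to the perturbed planar arrangement, using Euler's formula together with the bipartite same-color-disjoint structure to charge each lens to a feature of total linear count in $n$---such as an endpoint of one of its bounding curves, of which there are only $2n$---in a way that loads each such feature with only $O(1)$ charge. The main obstacle is designing a charging rule that actually enforces this $O(1)$ load bound: the crucial property to exploit is that the pairwise disjointness of same-color curves prevents lens faces from accumulating or nesting uncontrollably along any one red or blue curve, so that in particular one can, after an appropriate redistribution of charge, ensure that no endpoint (or other chosen anchor) is overcharged.
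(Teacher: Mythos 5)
Your preliminary observations are fine: the $K_{3,3}$ example showing that the bipartite tangency graph need not be planar is correct, and perturbing each tangency into two nearby crossings does turn every tangency into a distinct size-two (lens) face of the perturbed arrangement, since in the paper's general-position setting a sufficiently small disk around a touching point meets no third curve. This localization is essentially the same move as the paper's reformulation of tangencies as short connecting curves between pairs that are pulled apart (Theorem~\ref{thm:non-neighboring}). The problem is that everything after this point is missing: you reduce the theorem to the claim that the perturbed arrangement has $O(n)$ lens faces, and then only say that a charging or discharging rule ``should'' exist, while explicitly conceding that designing it is the main obstacle. That claim \emph{is} the theorem, so the proposal restates the problem rather than proving it.

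Moreover, the hinted charging idea cannot be routine, and the heuristic you offer for it is not sound. After perturbation the family is no longer $1$-intersecting, and the only structure you propose to use (Euler's formula, bipartiteness, disjointness within each color class) is not enough: as the paper recalls, Pach, Suk and Treml construct red--blue families with disjoint color classes in which pairs may meet twice and the number of tangencies is $\Omega(n\log n)$, and applying your same local surgery to those tangencies produces the same local picture of small empty lenses. So any correct argument must exploit, globally, the one residual trace of the $1$-intersecting hypothesis (formerly tangent pairs meet nowhere else). Also, the specific intuition that disjointness of same-color curves ``prevents lenses from accumulating along any one curve'' is false as a local statement: a single red curve may be tangent to arbitrarily many blue curves, so all the work is in a global redistribution that you do not specify. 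For comparison, the paper's proof does this work as follows: each tangency is replaced by a connecting curve between a now-disjoint red--blue pair; same-color connections are bounded by $3n-3$ via Lemma~\ref{lem:planar}; each bichromatic connecting curve is shrunk to lie in a single face $F$ of $\cA_{\cS}$ with endpoints on differently colored edges of $F$; Lemma~\ref{lem:planar} makes the resulting per-face graph on the edges of $F$ bipartite planar, hence with at most $2|E_F|-4$ edges, where the hypothesis that red--blue pairs cross at most once is used exactly here, to rule out size-two faces of $\cA_{\cS}$; finally summing over faces with Euler's formula (together with the dummy curves added at crossings) yields the bound $11n-11$. Your proposal contains no counterpart to this per-face planarity count or to any other mechanism that caps the total charge, so it has a genuine gap at the heart of the argument.
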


Note that it is trivial to construct an example with $\Omega(n)$ tangencies.
Theorem~\ref{thm:touchings} does not hold if a pair of curves in $\cS$ may intersect twice.
Indeed, Pach et al.~\cite{Treml} considered the following problem: what is the maximum number $f(n)$ of tangencies among $n$ $x$-monotone red and blue curves where no two curves of the same color intersect.
They showed that $\Omega(n\log n) \le f(n) \le O(n\log^2 n)$, where
their lower bound construction is $2$-intersecting (but not $1$-intersecting).

The number of tangencies within a $1$-intersecting set of $n$ ($x$-monotone) curves can be $\Omega(n^{4/3})$: it is not hard to obtain this bound using the famous construction of Erd\H{o}s of $n$ points and $n$ lines that determine that many point-line incidences \cite{pachbook}.
An almost matching upper bound of $O(n^{4/3}\log^{2/3}n)$ follows from a result of Pach and Sharir~\cite{PS91}.\footnote{A more careful analysis yields the upper bound $O(n^{4/3}\log^{1/3}n)$.}

\paragraph{Connecting curves.}
Instead of considering touching points among curves in a family of curves $\cS$, we may consider pairs of disjoint curves that are intersected by a curve $c$ from a different family of curves $\cC$, such that $c$ does not intersect any other curve.
Indeed, each touching point of two curves from $\cS$ can be replaced by a new, short curve that connects the two previously touching curves that become disjoint by redrawing one of them near the touching point.
Conversely, if $\cC$ consists of curves that connect \emph{disjoint} curves from $\cS$, then each connecting curve in $\cC$ can be replaced by a touching point between the corresponding curves by redrawing one of these two curves.
Therefore, studying touching points or such connecting curves are equivalent problems.
This gives the following reformulation of Theorem \ref{thm:touchings}.

\begin{thm'}{\ref*{thm:touchings}'}\label{lem:non-neighboring}\label{thm:non-neighboring}
	Let $\cS$ be a set of $1$-intersecting $n$ red and blue curves such that no two curves of the same color intersect.
	Suppose that $\cC$ is a set of pairwise disjoint curves such that each of them intersects exactly a distinct pair of disjoint curves from $\cS$. 
	Then $|\cC| = O(n)$.
\end{thm'}

If $\cS$ and $\cC$ are two families of curves, then we say that $\cC$ is \emph{grounded} with respect to $\cS$ if there is a connected region of $\R^2\setminus \cS$ that contains at least one point of every curve in $\cC$.
If $\cC$ is grounded with respect to $\cS$, then we can drop the assumption that $\cS$ is $1$-intersecting and prove the following variant.

\begin{thm}\label{thm:groundedcurves}
	Let $\cS$ be a set of $n$ red and blue curves such that no two curves of the same color intersect.
	Suppose that $\cC$ is a set of pairwise disjoint curves grounded with respect to $\cS$, such that each of them intersects exactly a distinct pair of curves from $\cS$. 
	Then $|\cC| = O(n)$.
\end{thm}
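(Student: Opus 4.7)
The plan is to reduce Theorem~\ref{thm:groundedcurves} to Theorem~\ref{thm:non-neighboring} by transforming $(\cS,\cC)$ into a pair $(\hat\cS,\hat\cC)$ satisfying the hypotheses of that theorem, with $|\hat\cS|=O(n)$ and $|\hat\cC|=|\cC|$. The central trick is to use the grounded face $F$ to localise each $c \in \cC$: fix a basepoint $p \in F$, pick for each $c$ a point $q_c \in c \cap F$ (which exists by groundedness), and attach to $c$ a simple path $\pi_c \subset F$ from $q_c$ to $p$. Because the arcs $c' \cap F$ cut $F$ into open pieces and the $\cC$-curves are pairwise disjoint, the paths $\pi_c$ can be chosen pairwise disjoint except at $p$ by a standard spanning-tree argument inside $F \setminus \bigcup_{c' \in \cC} c'$.

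With these extensions in place, each $c$ now has two half-curves emanating from $p$; tracing each outward, it either reaches an endpoint of $c$ inside $F$ or leaves $F$ through a first crossing with $\partial F$, which, by the intersection assumption on $c$, must lie on $s^c_1$ or $s^c_2$. I would then build a surrogate curve $\hat c$ that coincides with the initial portion of $\pi_c\cup c$ near $p$, leaves $F$ exactly once through each of $s^c_1$ and $s^c_2$, and otherwise stays close to $\partial F$: in the generic \emph{different-exits} case $\hat c$ is essentially a chord of $F$ from $s^c_1$ to $s^c_2$, while in the \emph{same-exits} case $\hat c$ takes a short excursion outside $F$ (tracking the original $c$) to reach the other curve. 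With suitable routing the $\hat c$'s remain pairwise disjoint.

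Finally, I would apply local surgery to $\cS$ near $\partial F$ and near each exterior corridor used in the same-exits construction to remove all intersections between every pair $\{s^c_1,s^c_2\}$ except (if necessary) the one at the intended $\hat c$-crossing, obtaining a family $\hat\cS$ that is $1$-intersecting and bipartite (same-colour curves still disjoint), of size $O(n)$, in which the pair $\{s^c_1,s^c_2\}$ associated with every $\hat c\in\hat\cC$ consists of disjoint curves. Applying Theorem~\ref{thm:non-neighboring} to $(\hat\cS,\hat\cC)$ then yields $|\cC|=O(n)$. The main obstacle is the simultaneous handling of the same-exits case together with the surgery on $\cS$: the surrogates must be made pairwise disjoint while threading through narrow exterior corridors, and successive local modifications of $\cS$ must keep the required crossings intact without introducing new same-colour intersections or spoiling the disjointness of pairs used by other surrogates.
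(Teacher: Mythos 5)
Your reduction hinges on the final ``surgery'' step, and that step is a genuine gap, not a technicality. To invoke Theorem~\ref{thm:non-neighboring} you need the \emph{entire} family $\hat\cS$ to be $1$-intersecting and every connected pair to be disjoint, but in the setting of Theorem~\ref{thm:groundedcurves} a red curve and a blue curve may cross arbitrarily many times, and these crossings are not removable by local modifications near $\partial F$: they can lie anywhere in the plane, and eliminating a crossing requires rerouting one of the two curves globally, which can create new same-colour intersections, raise the crossing number of other red--blue pairs above one, or destroy the property that other surrogates $\hat c'$ meet exactly their intended pair. Moreover your surgery only targets the pairs $\{s^c_1,s^c_2\}$ actually joined by some $c\in\cC$, whereas Theorem~\ref{thm:non-neighboring} needs \emph{all} red--blue pairs in $\hat\cS$ to cross at most once (the paper's remark about tolerating multiple crossings still requires that $\cA_{\hat\cS}$ have no size-two faces, which is again a global condition you have not arranged). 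The natural way to localise the curves --- replacing each $s\in\cS$ by its edges on $\partial F$ --- makes everything disjoint but blows $|\hat\cS|$ up to $\Theta(|F|)$, which can be superlinear in $n$; this is exactly the obstruction the paper points out when explaining why Lemma~\ref{lem:3-curve-sets} cannot be applied here. So the proposal as written does not yield $|\cC|=O(n)$; the obstacle you flag in your last sentence is the whole difficulty, and no mechanism is offered to resolve it.

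For comparison, the paper does not reduce to Theorem~\ref{thm:non-neighboring} at all. It trims each $c\in\cC$ to end on $\partial F$, splits $\cC$ according to whether one or both endpoints lie on $\partial F$, and bounds the first class by recording, along the cyclic order of attachment points on a component of $\partial F$, the sequences of red and of blue partner curves: disjointness of same-colour curves and of the connectors forbids an $abab$ pattern, so a Davenport--Schinzel bound (Lemma~\ref{lem:DS}, together with Proposition~\ref{prop:diff-consecutive}) gives a linear bound. The second class is handled by decomposing $\partial F$ into alternating runs (again counted via Davenport--Schinzel) and a random red/blue deletion argument that reduces to the planarity statement of Lemma~\ref{lem:planar}. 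If you want a correct proof along your lines you would essentially have to rediscover this boundary-sequence argument, since it is what substitutes for the disjointness and $1$-intersection hypotheses you are trying to manufacture by surgery.
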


Note that we have also dropped the assumption that a red curve and a blue curve which are connected by a curve in $\cC$ are disjoint. Therefore it is essential that $\cC$ is grounded with respect to $\cS$, for otherwise we might have $|\cC| =\Omega(n^2)$.
Indeed, let $\cS$ be a ($1$-intersecting) set of $n/2$ horizontal segments and $n/2$ vertical segments such that every horizontal segment and every vertical segment intersect. Then each such pair can be connected by a curve in $\cC$ very close to their intersection point. Hence $|\cC|=n^2/4$.

\medskip
Clearly, instead of \emph{curves}, Theorem \ref{thm:groundedcurves} could also be stated with a red and a blue family of disjoint \emph{shapes}, with no requirement at all about the shapes, except that each of them is connected.

\medskip
A corollary of Theorem \ref{thm:groundedcurves} improves a result of Keller, Rok and Smorodinsky~\cite{keller2020conflictfree} about conflict-free colorings of $L$-shapes.

\paragraph{Coloring L-shapes.}
An \emph{L-shape} consists of a vertical line-segment and a horizontal line-segment such that the left endpoint of the horizontal segment coincides with the bottom endpoint of the vertical segment (as in the letter {\sf 'L'}, hence the name). Whenever we consider a family L-shapes, we always assume that no pair of them have overlapping segments, that is, they have at most one intersection point. 

A family of L-shapes is \emph{grounded} if there is a horizontal line that contains the top point of each L-shape in the family.
A (grounded) \emph{L-graph} is a graph that can be represented as the intersection graph of a family of (grounded) L-shapes.
Gon{\c{c}}alves et al.~\cite{GoncalvesIP18} proved that every planar graph is an L-graph.
The line graph of every planar graph is also known to be an L-graph~\cite{FELSNER201648}.
As for grounded L-graphs, McGuinness~\cite{MCGUINNESS1996179}
proved that they are $\chi$-bounded.
Jel{\'{\i}}nek and T{\"{o}}pfer~\cite{Jelinek019} characterized grounded L-graphs in terms of vertex ordering with forbidden patterns.

Keller, Rok and Smorodinsky~\cite{keller2020conflictfree} studied \emph{conflict-free} colorings of string graphs\footnote{A \emph{string graph} is the intersection graph of curves in the plane.} and in particular of grounded L-graphs.
A coloring of the vertices of a hypergraph is \emph{conflict-free}, if every hyperedge contains a vertex whose color is not assigned to any of the other vertices of the hyperedge.
The minimum number of colors in a conflict-free coloring of a hypergraph $\cH$ is denoted by $\chi_{\rm CF}(\cH)$.
There is a vast literature on conflict-free coloring of hypergraphs that stem from geometric settings due to its application to frequencies assignment in wireless networks and its connection to \emph{cover-decomposability} and other coloring problems---see the survey of Smorodinsky~\cite{surveycf}, the webpage~\cite{cogezoo} and the references within.

One natural way of defining a hypergraph $\cH (\cS)$ with respect to a family of geometric shapes $\cS$ is as follows: the vertex set is $\cS$ and for every shape $S \in \cS$ there is a hyperedge that consists of all the members of $\cS \setminus \{S\}$ whose intersection with $S$ is non-empty.\footnote{If two shapes give rise to the same hyperedge, then this hyperedge appears only once in the hypergraph.}
This is the so-called \emph{punctured} or \emph{open neighborhood} hypergraph of the intersection graph.
Similarly, for two families of shapes, $\cS$ and $\cF$, the hypergraph $\cH(\cS,\cF)$ has $\cS$ as its vertex set and has a hyperedge for every $F \in \cF$ which consists of all the members of $\cS \setminus \{F\}$ whose intersection with $F$ is non-empty. Hence, $\cH(\cS)=\cH(\cS,\cS)$. 

\medskip
Using Theorem~\ref{thm:groundedcurves} we can improve the following result.
 
\begin{thm}[Keller, Rok and Smorodinsky~\cite{keller2020conflictfree}]
\label{thm:keller}
$\chi_{\rm CF}({\cH(\cL)}) = O(\log^3 n)$ for every set $\cL$ of $n$ grounded L-shapes.
Furthermore, for every $n$ there exists a set $\cL$ of $n$ grounded L-shapes such that $\chi_{\rm CF}({\cH(\cL)}) = \Omega(\log n)$.
\end{thm}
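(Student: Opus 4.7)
The plan is to use Smorodinsky's iterated halving framework for bounding the conflict-free chromatic number. The principle is that if, for every subfamily $\cL' \subseteq \cL$, one can find a \emph{shallow hitting set} $A \subseteq \cL'$ of size $\Omega(|\cL'|/g(|\cL'|))$ --- meaning that every $\ell \in \cL'$ whose neighborhood in $\cL'$ is non-empty meets $A$ in at least one and at most $O(1)$ elements --- then assigning a fresh color to $A$ and recursing on $\cL' \setminus A$ yields $\chi_{\mathrm{CF}}(\cH(\cL)) = O(g(n) \log n)$. It therefore suffices to construct such a hitting set with $g(n) = O(\log^2 n)$ for every subfamily of grounded L-shapes.

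To build the shallow hitting set I would combine two ingredients. First, since grounded L-graphs are $\chi$-bounded (McGuinness~\cite{MCGUINNESS1996179}), one can properly color any subfamily with $O(\log n)$ colors by iteratively removing small-degree vertices, and obtain a decomposition of $\cL'$ into $O(\log n)$ independent subfamilies. Second, the grounding line induces a natural total order on the top endpoints of the L-shapes, and this allows a recursive median split: at each level, each L-shape either lies entirely on one side of a vertical median line or is \emph{crossing} --- that is, its horizontal arm straddles the line. Iterating the median split $O(\log n)$ times, and then selecting, within each of the $O(\log n)$ independent classes, an L-shape per ``cell'' that hits every not-yet-covered neighborhood, produces a shallow hitting set with $g(n) = O(\log^2 n)$, and hence the bound $\chi_{\mathrm{CF}}(\cH(\cL)) = O(\log^3 n)$.

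The hard part, and the step that drives the improvement in this paper, is handling the crossing L-shapes at each level of the recursion, which can simultaneously interact with many L-shapes on both sides of the median. A direct accounting loses an extra $\log n$ factor here, because a crossing L-shape contributes to the hitting-set count on both sides. However, the interactions between the crossing L-shapes and their neighbors on one side can be modelled as a system of pairwise disjoint connecting curves grounded in the face bordering the median line, and Theorem~\ref{thm:groundedcurves} then yields only $O(|\cL'|)$ such interactions per level instead of the naive $O(|\cL'|\log|\cL'|)$. This saves precisely the factor needed to pass from $O(\log^3 n)$ to the improved bound $O(\log^2 n)$ claimed in the abstract; for the statement of Theorem~\ref{thm:keller} as given, the simpler charging with the extra logarithmic loss already suffices.
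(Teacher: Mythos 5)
First, note that the paper does not actually reprove Theorem~\ref{thm:keller}: it is quoted from~\cite{keller2020conflictfree}, and Section~\ref{sec:cf} only outlines that proof (one vertical split into $\cL_1,\cL_2,\cL_3$; the bound $\chi_{\rm CF}(\cH(\cL_2))=O(\log^2 n)$ for the middle class, obtained from a proper $O(\log n)$-coloring of the neighborhood hypergraph combined with Lemma~\ref{lem:weak-coloring}; then recursion on $\cL_1,\cL_3$ with a shared palette). Your proposal takes a different route and has genuine gaps. (1)~The recoloring framework you state is not sound: if the hitting set $A$ meets a neighborhood $N(\ell)$ in two or more elements, all of them receive the same fresh color, so a uniquely colored vertex of $N(\ell)$ must be supplied by the recursion on $\cL'\setminus A$; but that recursion is only conflict-free for hyperedges $N(\ell)\setminus A$ with $\ell\in\cL'\setminus A$ and $N(\ell)\setminus A\neq\emptyset$. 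Neighborhoods of the removed vertices $\ell\in A$, and neighborhoods entirely contained in $A$, get no uniquely colored element. The correct tool is exactly Lemma~\ref{lem:weak-coloring} (iteratively take a proper coloring of the current induced sub-hypergraph and peel off the largest color class), which is what both the original proof and this paper use; a shallow-hitting-set argument would have to recurse on $A$ itself, not on its complement. (2)~The step ``grounded L-graphs are $\chi$-bounded, hence every subfamily decomposes into $O(\log n)$ independent subfamilies'' is false: $\chi$-boundedness bounds the chromatic number by a function of the clique number, and $n$ pairwise intersecting grounded L-shapes force chromatic number $n$ in the intersection graph. What is actually proved in~\cite{keller2020conflictfree} is the much weaker statement that the hypergraph $\cH(\cL_2)$ admits a proper $O(\log n)$-coloring (every neighborhood non-monochromatic), via the decomposition $\cL_2=\cI\cup\cF_M\cup\cV'_R\cup\cV'_L$, where the only non-constant contribution is $\chi(\cD(\cL_2,\cI))=O(\log n)$, coming from the $O(n\log n)$ Delaunay-graph bound of Lemma~\ref{lem:keller}.

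Your final paragraph also misplaces where the improvement of the present paper enters: Theorem~\ref{thm:groundedcurves} (or Lemma~\ref{lem:keller-better}) replaces the $O(n\log n)$ bound of Lemma~\ref{lem:keller} by a linear one, making $\chi(\cD(\cL_2,\cI))$ and hence $\chi(\cH(\cL_2))$ constant, so that Lemma~\ref{lem:weak-coloring} gives $\chi_{\rm CF}(\cH(\cL_2))=O(\log n)$ and the divide-and-conquer yields $O(\log^2 n)$; it is not a charging argument for ``crossing'' L-shapes in a median recursion, and the set $\cI$ to which the theorem is applied is a specific family of pairwise disjoint L-shapes each intersecting exactly two others, produced by the structural decomposition of~\cite{keller2020conflictfree}. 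Finally, the statement also asserts the $\Omega(\log n)$ lower bound, which your proposal does not address at all.
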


In order to obtain this result and many other results considering (conflict-free) coloring of hypergraphs it is often enough to consider the chromatic number of a sub-hypergraph consisting of hyperedges of size two, that is, the \emph{Delaunay graph}.
For two families of geometric shapes $\cS$ and $\cF$, 
the \emph{Delaunay graph} of $\cS$ and $\cF$, denoted by $\cal D(S,F)$, is the graph whose vertex set is $\cS$ and whose edge set consists of pairs of vertices such that there is a member of $\cF$ that intersects exactly the shapes that correspond to these two vertices and no other shape.
Note that if $\cS$ is a set of planar points and $\cF$ is the family of all disks, then $\cal D(S,F)$ is the standard Delaunay graph of the point set $\cS$.

A key ingredient in the proof of Theorem~\ref{thm:keller} in~\cite{keller2020conflictfree} is the following lemma.

\begin{lem}[{\cite[Proposition 3.9]{keller2020conflictfree}}] \label{lem:keller}
Let $\cL \cup \cI$ be a set of grounded L-shapes such that $|\cL|=n$ and the L-shapes in $\cI$ are pairwise disjoint.
Then $\cD(\cL,\cI)$ has $O(n \log n)$ edges.
\end{lem}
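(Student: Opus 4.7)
The plan is to apply Theorem~\ref{thm:groundedcurves} directly in its curve form, in fact obtaining the stronger bound $|E(\cD(\cL,\cI))|=O(n)$, from which the lemma's $O(n\log n)$ follows. Split every L-shape of $\cL$ into its two straight-line segments, and let the \emph{red} family consist of the $n$ vertical segments and the \emph{blue} family of the $n$ horizontal segments, so $|\cS|=2n$. The paper's standing no-overlapping-segments convention applied to the family $\cL\cup\cI$ forces all L-shapes in $\cL\cup\cI$ to have pairwise distinct top $x$-coordinates (otherwise two vertical segments would overlap), and forces any two L-shapes of $\cL$ whose horizontals share the same $y$-coordinate to have disjoint $x$-ranges. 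Consequently the red segments are pairwise disjoint and so are the blue segments, verifying the same-color disjointness hypothesis of Theorem~\ref{thm:groundedcurves}.

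Pick one witness $I\in\cI$ per edge of $\cD(\cL,\cI)$ and let $\cC$ be this collection of representatives; it is pairwise disjoint as a subset of $\cI$. By the no-overlapping-segments convention each $I\in\cC$ meets each of its two paired L-shapes at one interior point of one segment, and meets no other L-shape of $\cL$ (by the definition of Delaunay edge), so $I$ meets exactly two elements of $\cS$. Distinct Delaunay edges correspond to distinct unordered L-shape pairs and hence to distinct unordered pairs from $\cS$. The grounding condition holds via the unbounded face of $\R^2\setminus\cS$: every L-shape of $\cL\cup\cI$ sits below the common top grounding line, so the open half-plane above that line is contained in a single unbounded component of $\R^2\setminus\cS$, and each $I\in\cC$ has its top endpoint on the grounding line at an $x$-coordinate not shared with any L-shape of $\cL$, placing that top endpoint in $\R^2\setminus\cS$ and in the unbounded component. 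Theorem~\ref{thm:groundedcurves} therefore yields $|\cC|=O(|\cS|)=O(n)$, giving $|E(\cD(\cL,\cI))|=O(n)\subseteq O(n\log n)$.

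The main subtlety I expect to handle is the non-generic case in which the intersection between a witness $I\in\cC$ and a paired L-shape occurs exactly at that L-shape's corner, i.e., the point where its vertical and horizontal segments meet. There $I$ touches two $\cS$-segments of one L-shape and thus would meet three rather than two elements of $\cS$. This is resolved by a small generic perturbation of $\cI$ that preserves the Delaunay graph, or by a tie-breaking rule that attributes each corner intersection to a single segment; with this caveat the argument is complete.
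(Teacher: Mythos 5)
Your proposal is correct and is essentially the paper's own derivation: the paper dispenses with Lemma~\ref{lem:keller} in one sentence by noting that Theorem~\ref{thm:groundedcurves} applies after splitting the $n$ L-shapes of $\cL$ into $n$ pairwise disjoint horizontal and $n$ pairwise disjoint vertical segments, yielding the stronger $O(n)$ bound, which is exactly your reduction. You merely spell out the details the paper leaves implicit (exactness and distinctness of the intersected segment pairs, the grounding via the top endpoints on the grounding line, and the degenerate corner case, which the paper's standing small-perturbation assumption already covers).
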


Theorem~\ref{thm:groundedcurves} clearly implies Lemma~\ref{lem:keller}, since every family of $n$ L-shapes consists of $n$ 
pairwise disjoint horizontal (red) segments and $n$ pairwise disjoint vertical (blue) segments. 
Thus, Theorem~\ref{thm:groundedcurves} is a twofold improvement of Lemma~\ref{lem:keller}: we consider a more general setting and prove a better upper bound.
Furthermore, our proof is simpler than the proof of Lemma~\ref{lem:keller} in~\cite{keller2020conflictfree} (especially the weak version of Theorem~\ref{thm:groundedcurves} that we prove in Section~\ref{sec:linear}).

\medskip
By replacing Lemma~\ref{lem:keller} with Theorem~\ref{thm:groundedcurves} (or its weaker version) in the proof of Theorem~\ref{thm:keller} we obtain a better upper bound for the number of colors that suffice to conflict-free color $n$ grounded L-shapes.

\begin{thm} \label{thm:cf-grounded}
	Let $\cL$ be a set of $n$ grounded L-shapes.
	Then it is possible to color every L-shape in $\cL$ with one of $O(\log^2 n)$ colors such that
	for each $\ell \in \cL$ there is an L-shape with a unique color among the L-shapes whose intersection with $\ell$ is non-empty.  
\end{thm}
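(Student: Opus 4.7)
The plan is to follow the proof of Theorem~\ref{thm:keller} from~\cite{keller2020conflictfree}, substituting Theorem~\ref{thm:groundedcurves} for Lemma~\ref{lem:keller} wherever the latter is invoked. Since Theorem~\ref{thm:groundedcurves} gives a linear bound on the number of edges in the relevant Delaunay graph instead of the $O(n\log n)$ bound of Lemma~\ref{lem:keller}, the corresponding proper chromatic-number estimate improves from $O(\log n)$ (via a degeneracy argument) to $O(1)$, and this improvement propagates through the recursive conflict-free coloring scheme of~\cite{keller2020conflictfree} to save one factor of $\log n$ in the final bound, reducing it from $O(\log^3 n)$ to $O(\log^2 n)$. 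First I would recall that the coloring in~\cite{keller2020conflictfree} is built in $O(\log n)$ rounds, where in each round one properly colors a suitable Delaunay-type auxiliary graph on pairs of L-shapes and then uses those colors as unique-color witnesses for the hyperedges resolved in that round; the Delaunay chromatic number enters multiplicatively, and is the only place where Lemma~\ref{lem:keller} contributes a $\log n$ factor.

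The key technical step is to verify that Theorem~\ref{thm:groundedcurves} legitimately replaces Lemma~\ref{lem:keller} in every such invocation. For a subfamily $\cL'\subseteq\cL$ and any family $\cI$ of pairwise disjoint grounded L-shapes, I would decompose each L-shape in $\cL'$ into its horizontal (red) and vertical (blue) segments; since L-shapes in $\cL'$ are pairwise non-overlapping, no two red segments and no two blue segments intersect. Any two L-shapes have at most one intersection point, so each L-shape in $\cI$ that corresponds to a Delaunay edge meets exactly two segments among these $2|\cL'|$ curves. Groundedness holds because every L-shape of $\cI$ reaches the common grounding horizontal line, so the unbounded face above that line is a single connected region of the complement of the segments that contains a point of each curve in $\cI$. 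Theorem~\ref{thm:groundedcurves} therefore yields $|E(\cD(\cL',\cI))|=O(|\cL'|)$, and this Delaunay graph is $O(1)$-degenerate and hence properly $O(1)$-colorable.

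The main obstacle is really just bookkeeping: I would need to trace through the argument of~\cite{keller2020conflictfree} and verify that no $\log n$ factor is secretly riding on a term other than the Delaunay chromatic estimate, so that replacing the latter's $O(\log n)$ bound by $O(1)$ genuinely saves exactly one logarithmic factor and not more or less. Since the authors explicitly assert that this substitution yields the $O(\log^2 n)$ bound, I anticipate no substantive mathematical difficulty beyond this routine verification.
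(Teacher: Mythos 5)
Your proposal is correct and follows essentially the same route as the paper: plug the linear Delaunay-graph bound (Theorem~\ref{thm:groundedcurves}, applied after splitting each L-shape into a red horizontal and a blue vertical segment, with groundedness supplied by the common grounding line) into the Keller--Rok--Smorodinsky scheme, so the hereditary proper chromatic number of the middle slab drops from $O(\log n)$ to $O(1)$ and Lemma~\ref{lem:weak-coloring} plus the divide-and-conquer recursion give $O(\log^2 n)$. Your explicit check that a witness L-shape of a Delaunay edge meets exactly one segment of each of the two L-shapes (because two L-shapes intersect in at most one point) is exactly the point the paper treats as immediate, so no gap remains.
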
 

The upper bound on the number of edges in the Delaunay graph also implies upper bounds for the number of hyperedges of size at most $k$, the chromatic number of the hypergraph and its VC-dimension.\footnote{Recall that the VC-dimension of a hypergraph $\cH=(\cV,\cE)$ is the size of its largest subset of vertices $\cV' \subseteq \cV$ that can be \emph{shattered}, that is, for every subset $\cV'' \subseteq \cV'$ there exists a hyperedge $h \in \cE$ such that $h \cap \cV' = \cV''$.} 
This was already shown, e.g., for \emph{pseudo-disks}~\cite{aronov21,buzaglo}, however, the same arguments apply in general.
We summarize these facts in the following statement.

\begin{thm}\label{thm:2unif-kunif}	
	Let $\cH=(\cV,\cE)$ be an $n$-vertex hypergraph. 
	Suppose that there exist absolute constants $c, c' \ge 0$ such that for every $\cV' \subseteq \cV$ the Delaunay graph of the sub-hypergraph\footnote{The hyperedges of this sub-hypergraph are the non-empty subsets in $\{h \cap \cV' \mid h \in \cE\}$; this is sometimes called the \emph{trace}, or the restriction to $\cV'$.} induced by $\cV'$ has at most $c|\cV'|-c'$ edges, then:
	\begin{enumerate}[label=(\roman*)]
		\item The chromatic number of $\cH$ is at most $2c+1$ (at most $2c$ if $c' > 0$);
		\item The VC-dimension of $\cH$ is at most $2c+1$ (at most $2c$ if $c'>0$); and
		\item $\cH$ has $O(k^{d-1}n)$ hyperedges of size at most $k$ where $d$ is the VC-dimension of $\cH$.
	\end{enumerate}
\end{thm}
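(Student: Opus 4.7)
The plan is to handle the three parts separately, with parts~(i) and~(ii) reducing directly to the Delaunay-edge bound and part~(iii) invoking the standard Clarkson--Shor random-sampling machinery.

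For~(i), I will exhibit a degeneracy ordering of the Delaunay graph and color greedily. For every $\cV' \subseteq \cV$, the Delaunay graph of the induced sub-hypergraph has at most $c|\cV'|-c'$ edges, so its average degree is at most $2c-2c'/|\cV'|$; this forces some vertex of integer Delaunay-degree at most $2c-1$ if $c'>0$, and at most $2c$ in general. Iterating yields an ordering $v_1,\ldots,v_n$ in which each $v_i$ has at most $2c-1$ (respectively $2c$) Delaunay neighbors among $\{v_1,\ldots,v_{i-1}\}$ inside the sub-hypergraph on $\{v_1,\ldots,v_i\}$, so greedy coloring in this order uses at most $2c$ (respectively $2c+1$) colors. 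To verify the resulting coloring is proper, I induct on $i$: a trace on $\{v_1,\ldots,v_i\}$ not containing $v_i$ is unaffected; a size-$2$ trace containing $v_i$ is exactly a Delaunay edge that the greedy rule avoids; and a size-$\ge 3$ trace containing $v_i$ restricts to a size-$\ge 2$ trace on $\{v_1,\ldots,v_{i-1}\}$, which is non-monochromatic by the inductive hypothesis.

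For~(ii), the argument is a one-line counting. If $\cV' \subseteq \cV$ of size $s$ is shattered, then every pair $\{u,v\}\subseteq\cV'$ is realized as a trace and hence is a $2$-element hyperedge of the sub-hypergraph on $\cV'$; that is, the Delaunay graph of that sub-hypergraph is the complete graph $K_s$. The hypothesis $\binom{s}{2}\le cs-c'$ rearranges to $s(s-1-2c)\le -2c'$, which forces $s\le 2c+1$ in general, and $s\le 2c$ when $c'>0$ (since $s=2c+1$ would entail $0\le -2c'$).

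For~(iii), I will run the standard Clarkson--Shor random-sampling argument exactly as in the pseudo-disk proofs of~\cite{aronov21,buzaglo}, which rely only on the linear Delaunay bound together with the VC-dimension bound established in~(ii). Sampling each vertex of $\cV$ independently with probability $p=1/k$ produces a set $\cV'$ whose trace hypergraph has at most $O(|\cV'|^d)=O((n/k)^d)$ distinct traces by Sauer--Shelah, and at most $c|\cV'|=O(n/k)$ two-element traces by hypothesis. For each hyperedge $h\in\cE$ with $|h|\le k$, the probability that $h\cap\cV'$ has a prescribed small size is $\Theta(1)$, and the standard Clarkson--Shor moment calculation---summing over small hyperedges weighted by the probability that their trace is $2$-element and comparing to the Delaunay bound on $\cV'$---yields $|\{h\in\cE:|h|\le k\}|=O(k^{d-1}n)$. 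The main obstacle is precisely this last step, since many hyperedges of $\cH$ can collapse to a single trace on $\cV'$; the cited pseudo-disk arguments handle this collision via a careful second-moment/uniqueness computation, and those calculations go through verbatim here because they use only the two hypotheses already at our disposal.
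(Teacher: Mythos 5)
Parts (i) and (ii) of your proposal are correct and are essentially the paper's own arguments: a greedy coloring along a degeneracy order of the Delaunay graph, and the inequality $\binom{s}{2}\le cs-c'$ for a shattered set of size $s$.

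Part (iii), however, has a genuine gap. The sketch you give would not produce the bound: you bound the distinct traces on the sample $\cV'$ by the ordinary Sauer--Shelah bound $O(|\cV'|^{d})$, while the probability that $h\cap\cV'$ equals a prescribed set of at most $d$ vertices is not $\Theta(1)$ but only about $q^{d}(1-q)^{k-d}$ with $q=1/k$; combining these two facts gives roughly $|\cE_{\le k}|\lesssim k^{d}\cdot(n/k)^{d}$, which is nowhere near $O(k^{d-1}n)$. The argument needs two ingredients that you neither state nor prove. First, a hereditary \emph{linear} bound of the form $|\cE'_{\le d}|\le b_{c,d}|\cV'|$ on the number of hyperedges of size at most $d$ in every induced sub-hypergraph; this does not follow from Sauer--Shelah but from the linear Delaunay hypothesis itself, via a sampling lemma showing that the graph of ``$k$-good'' pairs (pairs contained in a hyperedge of size at most $k$) is $O(ck)$-linear, combined with a lemma bounding the number of copies of $K_h$ in a $c$-linear graph by $O_{c,h}(n)$. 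Second, the collision problem you correctly identify (many hyperedges of $\cH$ sharing one trace on $\cV'$) is resolved not by a second-moment computation but by the stronger form of the Sauer--Shelah lemma: assign to each hyperedge a distinguished subset of at most $d$ of its vertices so that distinct hyperedges receive distinct subsets, declare $h$ to survive when $h\cap\cV'$ equals its assigned subset, and observe that surviving hyperedges inject into the traces of size at most $d$ on $\cV'$, whose expected number is at most $b_{c,d}\,qn$ by the first ingredient; comparing with the survival probability $q^{d}(1-q)^{k-d}\ge k^{-d}(1-1/k)^{k}$ yields $|\cE_{\le k}|=O(k^{d-1}n)$. Deferring to the pseudo-disk papers ``verbatim'' without identifying these steps (and with the $\Theta(1)$ and Sauer--Shelah trace-count claims as stated) leaves the proof of (iii) incomplete, even though your meta-claim that those arguments transfer to this abstract setting is in fact what the paper verifies.
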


Using a result of Chan et al.~\cite{hittingset} this has another consequence about finding hitting sets. We follow the statement and the usage of this theorem as in~\cite{aronov21} (see therein the definition of the minimum weight hitting set problem).

\begin{thm}\cite{hittingset}\label{thm:hittingset}
	Let $\cH=(\cV,\cE)$ be a hypergraph, where the number of edges of cardinality $k$ for any restriction of $\cH$ to a subset $\cV' \subset \cV$ is at most $O(|\cV'|k^c)$, where $c > 0$ is an absolute constant and $k\le |\cV'|$ is an integer	parameter. Then there exists a randomized polynomial-time $O(1)$-approximation algorithm for the minimum weight hitting set problem for $\cH$.
\end{thm}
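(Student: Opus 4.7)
The plan is to combine three standard ingredients: the LP relaxation of the weighted hitting set problem, a weighted $\epsilon$-net theorem derived from the shallow-cell-complexity hypothesis, and the classical reduction turning $\epsilon$-nets into hitting-set approximations. First I would set up and solve in polynomial time the natural LP: minimize $\sum_{v \in \cV} w(v) x_v$ subject to $\sum_{v \in h} x_v \geq 1$ for every $h \in \cE$ and $x_v \in [0,1]$. The target is to round an optimal fractional solution $x^{*}$ into an integral hitting set of total weight $O(1) \cdot \sum_v w(v) x^{*}_v$.

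The crux is the following $\epsilon$-net statement: under the hypothesis that every restriction of $\cH$ to $m$ vertices has $O(m k^{c})$ hyperedges of cardinality $k$, the hypergraph admits weighted $\epsilon$-nets of size $O(1/\epsilon)$. I would establish this by a quasi-uniform sampling scheme in the style of Varadarajan and Chan--Grant--K\"onemann--Sharpe: partition the hyperedges into dyadic cardinality buckets $B_i = \{h : |h| \in [2^i, 2^{i+1})\}$, sample each vertex $v$ independently at a bucket-dependent rate proportional to $x^{*}_v / 2^i$, and bound the expected number of uncovered hyperedges in each bucket by combining a Clarkson-type random-sampling inequality with the shallow-cell-complexity bound applied to the restriction on the currently sampled vertex set. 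Summing over the $O(\log n)$ buckets, the total expected number of uncovered ``heavy'' hyperedges is $O(1)$, so by the probabilistic method a weighted $\epsilon$-net of size $O(1/\epsilon)$ exists.

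Given such nets, the $O(1)$-approximation follows by a standard template. View the normalized fractional solution $x^{*}/\|x^{*}\|_1$ as a probability distribution, set $\epsilon = 1/\mathrm{OPT}_{\rm LP}$, and extract a weighted $\epsilon$-net; every hyperedge is ``heavy'' under $x^{*}$ (its fractional weight is at least $1$), so the net is an actual hitting set, and its expected total weight is $O(1) \cdot \mathrm{OPT}_{\rm LP} \le O(1) \cdot \mathrm{OPT}$. Alternatively, one can bypass the LP by using the Br\"onnimann--Goodrich iterative-reweighting loop, invoking the net oracle $O(\log \mathrm{OPT})$ times; both routes yield a randomized polynomial-time algorithm, and repetition or standard derandomization turns ``expected'' into ``with high probability.''

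The main obstacle is the $\epsilon$-net step. A single global sampling rate will not do: small hyperedges need dense sampling while large ones are easily hit, so rates must be balanced across cardinality scales, and one must verify that the exponent $c$ in the shallow-cell bound contributes only to the hidden constant rather than to the asymptotic size of the net. Controlling the concentration simultaneously across the dyadic buckets, so that a union bound loses only a constant factor, is the delicate piece---and it is exactly where quasi-uniform (as opposed to uniform) sampling becomes essential.
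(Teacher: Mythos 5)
This statement is not proved in the paper at all: Theorem~\ref{thm:hittingset} is imported verbatim from Chan et al.~\cite{hittingset} and used as a black box (the paper only combines it with Theorem~\ref{thm:2unif-kunif} to get Theorem~\ref{thm:hittingsetfromdel}). What you have written is, in outline, a reconstruction of the proof strategy of that cited work: LP relaxation, weighted $\epsilon$-nets of size $O(1/\epsilon)$ for set systems whose restrictions have $O(|\cV'|k^c)$ hyperedges of cardinality $k$ (shallow cell complexity with constant $\varphi$), obtained by quasi-uniform sampling in the Varadarajan/Chan--Grant--K\"onemann--Sharpe style, and the standard net-to-approximation reduction. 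So you are on the same route as the source the paper relies on, not on a different one.

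As a standalone proof, however, the proposal has a genuine gap exactly where you locate the difficulty. The entire content of the theorem is the $\epsilon$-net step, and the scheme you describe---independent sampling of each vertex at a bucket-dependent rate proportional to $x^{*}_v/2^i$, plus a Clarkson-type bound per dyadic bucket---does not by itself yield nets of size $O(1/\epsilon)$ with weight $O(\mathrm{OPT}_{\rm LP})$; straightforward independent sampling loses at least a logarithmic factor, and the point of \emph{quasi-uniform} sampling is precisely that it is a correlated, recursive (forest-building/halving) process in which each vertex is kept with probability $O(x^{*}_v)$ but not independently, and the uncovered hyperedges are repaired across scales without a union-bound loss. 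Asserting ``the total expected number of uncovered heavy hyperedges is $O(1)$'' is the theorem, not a step toward it. Two smaller inaccuracies: the heaviness threshold should be $\epsilon=1/\|x^{*}\|_1$ (the total fractional mass), not $1/\mathrm{OPT}_{\rm LP}$, since with general weights these differ; and in the weighted setting the approximation guarantee comes from the sampling probabilities being $O(x^{*}_v)$ (so the expected weight is $O(\sum_v w(v)x^{*}_v)$), not from a bound on the \emph{number} of net points, so the phrase ``weighted $\epsilon$-nets of size $O(1/\epsilon)$'' conflates two different guarantees. If the intent is merely to justify the citation, none of this is needed; if the intent is to prove the theorem, the quasi-uniform sampling analysis has to be carried out in full.
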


Theorem \ref{thm:2unif-kunif} and Theorem \ref{thm:hittingset} together imply:

\begin{thm}\label{thm:hittingsetfromdel}	
	Let $\cH=(\cV,\cE)$ be an $n$-vertex hypergraph. 
	Suppose that there exist absolute constants $c, c' \ge 0$ such that for every $\cV' \subseteq \cV$ the Delaunay graph of the sub-hypergraph induced by $\cV'$ has at most $c|\cV'|-c'$ edges, then there exists a randomized polynomial-time $O(1)$-approximation algorithm for the minimum weight hitting set problem for $\cH$.
\end{thm}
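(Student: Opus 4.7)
The plan is to derive Theorem~\ref{thm:hittingsetfromdel} by chaining the two preceding results, with one small bookkeeping observation to guarantee that the Delaunay hypothesis is inherited by every induced sub-hypergraph, after which Theorem~\ref{thm:2unif-kunif} supplies exactly the input Theorem~\ref{thm:hittingset} needs.

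First I would verify the inheritance: for any $\cV'' \subseteq \cV' \subseteq \cV$, the restriction to $\cV''$ of the sub-hypergraph induced by $\cV'$ is the same as the sub-hypergraph induced by $\cV''$ directly, because the trace operation is transitive. Consequently the hypothesized Delaunay bound $c|\cV''|-c'$ applies to every restriction of every induced sub-hypergraph, and so each induced sub-hypergraph of $\cH$ satisfies the hypothesis of Theorem~\ref{thm:2unif-kunif} with the same absolute constants $c, c'$.

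Next I would invoke Theorem~\ref{thm:2unif-kunif}. Part~(ii) gives that the VC-dimension of $\cH$ (and of any induced sub-hypergraph) is at most $d := 2c+1$, an absolute constant. Applying part~(iii) to the sub-hypergraph induced by an arbitrary $\cV' \subseteq \cV$, which is legitimate by the previous paragraph, the number of hyperedges of size at most $k$ in the restriction to $\cV'$ is $O(k^{d-1}|\cV'|)$; in particular, the number of hyperedges of cardinality exactly $k$ in that restriction is $O(|\cV'|\,k^{d-1})$.

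Finally, taking the absolute constant $d-1$ in the role of the constant $c$ appearing in the statement of Theorem~\ref{thm:hittingset}, the bound just derived is precisely the required input. Its conclusion is the desired randomized polynomial-time $O(1)$-approximation algorithm for the minimum weight hitting set problem on $\cH$, finishing the proof. There is no genuine obstacle here: the argument is a straightforward composition of the two prior theorems, and the only point that needs a moment's thought is the transitivity-of-traces observation that propagates the Delaunay hypothesis from $\cH$ to all of its induced sub-hypergraphs.
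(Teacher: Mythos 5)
Your proposal is correct and is essentially the paper's own argument: the paper proves Theorem~\ref{thm:hittingsetfromdel} simply by combining Theorem~\ref{thm:2unif-kunif} (applied to every induced sub-hypergraph, where the constants depend only on $c$) with Theorem~\ref{thm:hittingset}. Your added remarks on trace transitivity and the uniformity of the exponent $d-1$ are just the bookkeeping the paper leaves implicit.
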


Thus, we have:

\begin{cor}\label{cor:curves-groundedcurves}
	Let $\cS$ be a set of $n$ red and blue curves, such that no two curves of the same color intersect and let $\cC$ be another set of pairwise disjoint curves which is grounded with respect to $\cS$.
	Then the chromatic number of the intersection hypergraph $\cH(\cS,\cC)$
	and its VC-dimension are bounded by a constant, and for every $k$ the number of hyperedges of size at most $k$ in $\cH(\cS,\cC)$ is $k^{O(1)}n$. Also there exists a randomized polynomial-time $O(1)$-approximation algorithm for the minimum weight hitting set problem for $\cH$.
\end{cor}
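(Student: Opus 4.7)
The plan is to verify the hypothesis of Theorem~\ref{thm:2unif-kunif} and Theorem~\ref{thm:hittingsetfromdel} for the intersection hypergraph $\cH(\cS,\cC)$: namely, that there exist absolute constants $c,c'\ge 0$ such that for every vertex subset $\cV'\subseteq \cS$, the Delaunay graph of the trace of $\cH(\cS,\cC)$ to $\cV'$ has at most $c|\cV'|-c'$ edges. Once this is in place, both theorems can be invoked verbatim to obtain the constant bound on the chromatic number, a constant VC-dimension $d$, and the randomized $O(1)$-approximation for minimum weight hitting set. The bound on the number of hyperedges of size at most $k$ from Theorem~\ref{thm:2unif-kunif}(iii) becomes $O(k^{d-1}n)=k^{O(1)}n$ since $d$ is an absolute constant.

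So the only real task is to count Delaunay edges after restriction. Fix any $\cV'\subseteq \cS$ and consider any edge $\{s_1,s_2\}$ of the Delaunay graph of the trace. By definition of the trace, there is some $c\in \cC$ whose set of intersections with $\cV'$ is exactly $\{s_1,s_2\}$. For each Delaunay edge I would pick one such witness and collect these witnesses into a subset $\cC'\subseteq \cC$. Then $\cC'$ is pairwise disjoint (as a subset of $\cC$), each of its curves meets exactly a distinct pair of curves in $\cV'$, while $\cV'$ itself remains a set of red and blue curves with no two monochromatic curves intersecting. This is precisely the setup of Theorem~\ref{thm:groundedcurves}, modulo the grounded condition.

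The grounded condition transfers for free. Since $\cV'\subseteq \cS$, we have $\R^2\setminus \cS\subseteq \R^2\setminus \cV'$, so every connected component of $\R^2\setminus \cS$ sits inside some connected component of $\R^2\setminus \cV'$. In particular, the region of $\R^2\setminus \cS$ witnessing that $\cC$ is grounded with respect to $\cS$ lies in a single connected region of $\R^2\setminus \cV'$, and this region meets every curve in $\cC$ and hence every curve in $\cC'$. Applying Theorem~\ref{thm:groundedcurves} to the pair $(\cV',\cC')$ therefore yields $|\cC'|=O(|\cV'|)$, which is precisely the number of Delaunay edges we needed to bound, giving constants $c,c'$ that are inherited from Theorem~\ref{thm:groundedcurves} and independent of $\cV'$.

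Since the whole argument is a direct application of Theorem~\ref{thm:groundedcurves} feeding into Theorems~\ref{thm:2unif-kunif} and \ref{thm:hittingsetfromdel}, I do not expect any real technical obstacle. The only point that deserves a second glance is the hereditary nature of the grounded property under restriction of $\cS$, and as indicated above this is immediate once the containment $\R^2\setminus\cS\subseteq \R^2\setminus \cV'$ is spelled out.
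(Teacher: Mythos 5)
Your proposal is correct and follows exactly the route the paper intends: the paper derives Corollary~\ref{cor:curves-groundedcurves} by combining Theorem~\ref{thm:groundedcurves} with Theorems~\ref{thm:2unif-kunif} and~\ref{thm:hittingsetfromdel}, and your verification of the hereditary hypothesis (choosing one witness curve per Delaunay edge of the trace and noting that groundedness is preserved under passing from $\cS$ to $\cV'\subseteq\cS$ since $\R^2\setminus\cS\subseteq\R^2\setminus\cV'$) is precisely the step the paper leaves implicit. No gaps.
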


Note that the upper bounds on the chromatic number and VC-dimension can be deduced easily in this case without using Theorem~\ref{thm:2unif-kunif} (see Section~\ref{sec:general-bounds}).

\paragraph{Organization.}
In Section~\ref{sec:1'} we prove Theorem~\ref{thm:touchings}.
In Section~\ref{sec:linear} we prove a weaker version of Theorem~\ref{thm:groundedcurves} which is still stronger than  Lemma~\ref{lem:keller} and suffices for proving Theorem~\ref{thm:cf-grounded}.  
Then in Section~\ref{sec:cf} we give an outline of the proof of Theorem~\ref{thm:keller} in~\cite{keller2020conflictfree} and explain how by plugging into it Theorem~\ref{thm:groundedcurves} or its weaker version we obtain Theorem~\ref{thm:cf-grounded}.
Theorem~\ref{thm:groundedcurves} is proved in Section~\ref{sec:groundedcurves}. 
For completeness, we prove Theorem~\ref{thm:2unif-kunif} in Section~\ref{sec:general-bounds}.

In order to avoid technicalities and pathological cases we assume henceforth that all the curves that we consider are non-self-intersecting polygonal chains consisting of finitely many segments and that every pair of curves intersect at finitely many points. Each intersection point involves exactly two curves and is either a proper crossing of these curves or an endpoint of one of them that belongs to the interior of the other curve.\footnote{For L-shapes these properties are trivially satisfied after an appropriate small perturbation.} 

\section{Proof of Theorem~\ref{thm:touchings}}\label{sec:1'}
We begin with the following definitions which we will use throughout the paper.
Let $\cC$ be a set of curves in the plane.
Then $\cC$ induces a partition of the plane, which is called the \emph{arrangement} of $\cC$,
$\cA_\cC$, and consists of \emph{vertices}, \emph{edges}\footnote{Not to be confused with vertices and edges of a graph.} and \emph{faces}.
A \emph{vertex} is either an endpoint of a curve or an intersection point of (two) curves;
an \emph{edge} is a maximal sub-curve that does not contain any vertices in its interior; and
a \emph{face} is the closure of a maximal connected region of $\mathbb{R}^2\setminus \cC$.
The vertices and the edges of $\cA_\cC$ naturally induce a plane graph $G_{\cC}$. 
The size of a face $F$, denoted by $|F|$, is the number of edges that are adjacent to $F$, where the cut-edges (bridges) in $G_{\cC}$ are counted with multiplicity two. 
We denote by $E_F$ the set of edges that bound $F$.
Note that if $E_F$ contains a cut-edge, then $|E_F| < |F|$.

We will use the following simple lemma, which essentially follows from `contracting' some curves into points to get a plane graph.

\begin{lem}[{\cite[Lemma 2.6]{keller2020conflictfree}}]\label{lem:planar}
Let $\cS$ and $\cC$ be two sets of curves such that the curves within each set are pairwise disjoint and every curve in $\cC$ intersects exactly two curves from $\cS$.
Then $\cD(\cS,\cC)$ is a planar graph.
\end{lem}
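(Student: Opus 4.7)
The plan is to realise $\cD(\cS,\cC)$ as a minor of the plane graph $G = G_{\cS \cup \cC}$ of the arrangement of $\cS \cup \cC$. Because the curves in $\cS$ are pairwise disjoint and the curves in $\cC$ are pairwise disjoint, the only vertices of $G$ lying in the interior of any $s \in \cS$ come from its intersections with curves of $\cC$; in particular, the restriction $G_s \subseteq G$ of the arrangement to each $s$ is a simple path (not a cycle), and the paths $\{G_s\}_{s \in \cS}$ are pairwise disjoint. A simple arc in $\R^2$ admits a topological-disk tubular neighborhood, and such neighborhoods may be chosen pairwise disjoint, so the operation of contracting every $G_s$ simultaneously to a single vertex $v_s$ can be realised by a homeomorphism of $\R^2$ (or, more formally, via Moore's theorem on cellular upper-semicontinuous decompositions). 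Let $M$ denote the resulting plane multigraph.

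Next I would verify that every edge of $\cD(\cS,\cC)$ appears in $M$. Fix $c \in \cC$ meeting $s_1, s_2 \in \cS$. Since $c$ meets no other curve of $\cS$ and the curves in $\cC$ are pairwise disjoint, every vertex of $G$ on $c$ either lies on $s_1 \cup s_2$ or is an endpoint of $c$ that happens to lie off $\cS$. Labelling the vertices of $G$ on $c$ that do belong to $\cS$ by $1$ or $2$ according to which of $s_1, s_2$ they lie on, and noting that both labels must appear (since $c$ intersects both $s_1$ and $s_2$), we find two consecutive such vertices along $c$ with different labels. The sub-arc of $c$ joining them is a single edge of $G$ whose two endpoints lie on $s_1$ and $s_2$ respectively; after the contractions, this edge becomes an edge of $M$ between $v_{s_1}$ and $v_{s_2}$. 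Consequently $\{v_{s_1}, v_{s_2}\}$ is an edge of the underlying simple graph of $M$, and so $\cD(\cS,\cC)$ is (isomorphic to) a subgraph of a planar graph, and is itself planar.

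The only delicate point will be a rigorous justification that simultaneous contraction of the paths $G_s$ preserves planarity. Once one observes that each $G_s$ is a simple arc rather than a cycle, this reduces to the standard topological fact that a thin tubular neighborhood of a simple arc in $\R^2$ is a topological disk, so collapsing it to a point leaves the ambient space homeomorphic to $\R^2$; everything else in the argument is combinatorial and routine.
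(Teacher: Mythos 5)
Your proof is correct and follows exactly the route the paper indicates for this lemma (which it cites from Keller, Rok and Smorodinsky and describes as "essentially follows from contracting some curves into points to get a plane graph"): contract each arrangement path $G_s$ to a vertex, and observe that each $c\in\cC$ yields an arrangement edge joining the two contracted vertices, so $\cD(\cS,\cC)$ is a subgraph of a planar minor of the arrangement graph. The extra care about $G_s$ being an arc rather than a cycle is harmless but not needed, since contracting any connected subgraph of a plane graph preserves planarity.
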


Next we prove Theorem~\ref{thm:non-neighboring} which is equivalent to Theorem~\ref{thm:touchings}. 
Let $\cS$ be a set of $1$-intersecting red and blue curves, such that no two curves of the same color intersect, and let $\cC$ be another set of curves such that each curve in $\cC$ intersects a pair of disjoint curves from $\cS$ and no other curve from $\cS \cup \cC$.
We will show that $|\cC| = O(n)$.
Observe first that it follows from Lemma~\ref{lem:planar} that there are at most $3n-3$ curves in $\cC$ such that each of them intersects two curves from $\cS$ of the same color.\footnote{We sometimes use the weaker bound $3n-3$ (resp., $2n-2$) for the maximum size of an $n$-vertex (resp., bipartite)  planar graph, since it holds for every $n$.}

It remains to bound the number of curves in $\cC$ that intersect curves of different colors.
Next we consider only this set of curves $\cC'$.
Observe that each curve $c \in \cC'$ contains a sub-curve $c'$ whose endpoints are on edges of different colors that bound the same face of $\cA_{\cS}$ and whose interior is contained in that face.
We replace every curve in $\cC'$ with one of its sub-curves with these properties and, by a slight abuse of notation, keep using $\cC'$ to denote the new set of (sub-)curves.

By slightly extending each curve in $\cS$, every intersection point of two curves becomes a proper crossing of them.
Denote by $x$ the number of intersection points of two curves from $\cS$. 
Thus $\cal A_\cS$ has $2n+x$ vertices and $n+2x$ edges.
Let $\cF$ be the face set of $\cal A_\cS$ and let $e$ denote the number of edges of $\cal A_\cS$.
Then by Euler's formula we have $e=n+2x \le (2n+x)+|\cF|-2$, and therefore, $x\le n+|\cF|-2$.

We now add $4x$ dummy curves that connect the four pairs of neighboring edges of $\cal A_\cS$ around every crossing point. These curves are drawn such that they do not intersect the other curves.
For a face $F \in \cF$ let $\cC'_F$ be the curves in $\cC'$ whose interiors are inside $F$.
Then it follows from Lemma~\ref{lem:planar} that $\cD(E_F,\cC'_F)$ is a bipartite planar graph and therefore the number of its edges is at most $2|E_F|-4$ (since a red curve and a blue curve may intersect at most once, $\cA_\cS$ does not contain size-two faces). Since $\sum_{F \in \cF} |E_F| \le 2e$ we have:
$|\cC'|+4x \le \sum_{F \in \cF} (2|E_F|-4) \le 4e-4|\cF| = 4n+8x-4|\cF|.$ 
Hence, $|\cC'| \le 4n-4|\cF|+4x \le 4n-4|\cF|+4n+4|\cF|-8 = 8n-8$, and the number of edges in $\cD(\cS,\cC)$ is at most $11n-11$. 
This finishes the proof.

\paragraph{Remarks.}
(1)~we have only used that curves from $\cC$ do not connect \emph{neighboring} edges of the arrangement of red and blue curves, that is, edges that are consecutive along some face of the arrangement---this is weaker than requiring that each connected pair of red and blue curves involves disjoint curves.

However, if $\cS$ is the union of three sets of pairwise disjoint curves instead of two, then we can no longer claim that $\cD(\cS,\cC)$ has linearly many edges when the curves in $\cC$ may connect only non-neighboring edges (of possibly intersecting curves).
Indeed, consider a triangular grid formed by $n$ segments ($n/3$ segments of each of three directions).
By slightly shifting the segments of one orientation parallel to themselves, each of the $\Omega(n^2)$ intersection points is replaced by a triangle which is adjacent to at least one hexagon.
For each such hexagon it is possible to connect two non-neighboring, non-parallel edges by a curve that intersects no other edges. Thus the number of these curves is $\Omega(n^2)$.

(2)~The only place where we used that each pair of a red curve and a blue curve intersects at most once is for claiming that $\cA_{\cS}$ does not contain size-two faces. Therefore Theorem~\ref{thm:non-neighboring} remains true even when each such pair intersects finitely many times, as long as there are no size-two faces in $\cA_{\cS}$.

\section{Improving Lemma~\ref{lem:keller}}
\label{sec:linear}

In this section we improve the bound in Lemma~\ref{lem:keller} by proving the following weak version of Theorem~\ref{thm:groundedcurves}. 
This bound suffices for obtaining the upper bound in Theorem~\ref{thm:cf-grounded}.

\begin{lem}\label{lem:keller-better}
	Let $\cS$ be a set of $n$ axis-parallel line-segments and let $\cC$ be a set of pairwise disjoint curves grounded with respect to $\cS$.
	Then $\cD(\cS,\cC)$ has at most $13n-11$ edges. 
\end{lem}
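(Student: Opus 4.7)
The plan is to exploit the grounding hypothesis to confine every curve of $\cC$ to the closure of a single face $F^*$ of $\cA_{\cS}$, and then mimic the planarity argument of Section~\ref{sec:1'}. The key structural observation is that every $c \in \cC$ is contained in $\overline{F^*}$: the interior of $c$ is disjoint from $\cS$ (as $c$ meets $\cS$ only at the two crossings with the pair of $\cS$-segments it connects), so it lies in a single face of $\cA_{\cS}$; since $c$ passes through $F^*$ by grounding, that face must be $F^*$. Consequently, both endpoints of every edge of $\cD(\cS,\cC)$ lie in the set $\cS^* \subseteq \cS$ of segments that bound $F^*$, and $\cD(\cS,\cC) = \cD(\cS^*,\cC)$.

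I then partition $\cC$ by the orientations of the connected pair. For curves connecting two segments of the same orientation, applying Lemma~\ref{lem:planar} separately to the pairwise disjoint families $\cS_h$ and $\cS_v$ with the corresponding same-orientation curves of $\cC$ gives a disjoint union of two planar graphs on at most $n$ vertices in total, contributing at most $3n-3$ Delaunay edges. For the remaining curves $\cC'$, by the observation each is a chord of $\overline{F^*}$ whose endpoints lie on a horizontal edge and a vertical edge of $E_{F^*}$; applying Lemma~\ref{lem:planar} to the pairwise interior-disjoint sub-segments $E_{F^*}$ together with $\cC'$ yields a planar bipartite graph whose number of edges is at most $2|E_{F^*}|-4$.

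To finish, one needs a linear upper bound $|E_{F^*}| \le 5n-2$ (or a slight variant). I expect this to follow from Euler's formula applied to $\cA_{\cS}$ together with the fact that axis-parallel arrangements in general position contain no size-two or triangular faces, so every face has size at least $4$. This yields at most $10n-8$ opposite-orientation Delaunay edges, and hence the total $\le 13n-11$.

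The principal obstacle is obtaining this linear bound on $|E_{F^*}|$ with a sharp enough constant; the face complexity of an axis-parallel arrangement can in principle be delicate. A cleaner alternative that sidesteps the issue is to identify, for each $s \in \cS^*$, all of its $E_{F^*}$-vertices into a single vertex of $\cD(E_{F^*},\cC')$ while preserving planarity of the embedding (this is legitimate because all $E_{F^*}$-vertices lie on the boundary of the common unbounded face of that plane graph, so auxiliary edges between co-segment vertices can be routed outside $F^*$ without crossings and then contracted). The resulting graph is a bipartite planar graph on $|\cS^*| \le n$ vertices, giving $|\cD(\cS,\cC')| \le 2n-4$ and the sharper total bound $|\cD(\cS,\cC)| \le 5n-7$, which is in particular at most $13n-11$.
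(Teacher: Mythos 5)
Your central structural claim is false, and the rest of the argument leans on it. You assert that the interior of each $c\in\cC$ is disjoint from $\cS$ and hence that $c\subseteq\overline{F^*}$; but the two segments that $c$ intersects are met at interior points of $c$ (the Delaunay condition only restricts \emph{which} segments $c$ meets, not where or how many times), so $c$ in general crosses into other faces, and the grounding hypothesis only guarantees that $c$ meets $F^*$ somewhere. Concretely, let $h$ be a long horizontal segment, let two vertical segments and a lower horizontal segment form with $h$ a closed box below $h$, and place a short vertical segment $v$ strictly inside the box; a connecting curve that starts in the unbounded face (which we may take as the grounded face $F^*$), crosses $h$ once and ends on $v$ intersects exactly $h$ and $v$, yet $v$ contributes no edge to $\partial F^*$ and no sub-curve of $c$ is a chord of $F^*$ joining an $h$-edge to a $v$-edge. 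So both reductions ``$\cD(\cS,\cC)=\cD(\cS^*,\cC)$'' and ``every opposite-orientation curve is a chord of $\overline{F^*}$ with endpoints on $E_{F^*}$'' fail, and with them your application of Lemma~\ref{lem:planar} to $(E_{F^*},\cC')$. The paper's Lemma~\ref{lem:3-curve-sets} is designed around exactly this difficulty: it keeps only one endpoint on an edge of $F$ and lets the other endpoint lie anywhere on the \emph{whole} other segment, applying Lemma~\ref{lem:planar} to $E_r\cup\cS_b$ and to $E_b\cup\cS_r$, which is why the resulting bound involves both $|F|$ and $|\cS|$.

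The two remaining ingredients are also not established. The bound $|E_{F^*}|=O(n)$ is not a routine Euler-formula consequence: Euler's formula together with ``no faces of size less than $4$'' controls total complexity in terms of the number of crossings (which can be quadratic), not the complexity of a single face, and for general segments a single face can indeed have superlinear complexity $\Theta(n\alpha(n))$; the paper proves $|E_F|\le 4n-4$ for axis-parallel segments by the turning-angle argument of Lemma~\ref{lem:segments-cell-complexity}, and some such argument is genuinely needed. Finally, the ``cleaner alternative'' is unsound: one class of vertices lying on a common face can be merged while preserving planarity, but here you must merge many classes (the $E_{F^*}$-edges of each segment) whose members interleave along $\partial F^*$ (already one horizontal crossing one vertical produces the interleaved pattern $h,v,h,v$), and interleaving classes cannot all be joined by pairwise disjoint auxiliary arcs in the complement of $F^*$; routing them along the segments forces crossings of auxiliary arcs near segment crossings. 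Had this contraction been valid, it would give $|\cD(\cS,\cC)|\le 5n-7$ and would essentially trivialize Theorem~\ref{thm:groundedcurves}, whose proof in the paper instead requires Davenport--Schinzel sequences and probabilistic deletions --- a strong indication that such a shortcut does not exist.
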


By a slight perturbation of the segments if needed, we may assume that parallel segments do not fall on the same line---this does not decrease the number of edges of $\cD(\cS,\cC)$, so from now on we assume this is the case.

Recall that $E_F$ denotes the set of edges that bound a face $F$ in the arrangement $\cA_\cS$, where cut-edges are counted once.
Next we would like to bound $|E_F|$ where $\cS$ is a set of axis-parallel segments.
The following fact is probably known, however, we provide a proof for completeness and since we could not find any reference to it.

\begin{lem}\label{lem:segments-cell-complexity}
	Let $\cS$ be a set of $n$ axis-parallel line-segments, $n>1$, and let $F$ be a face of $\cal A_S$.
	Then $|E_F| \le 4n-4$.
	This bound is tight.
\end{lem}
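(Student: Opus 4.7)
The plan is to prove the upper bound $|E_F|\le 4n-4$ by induction on $n$, with a separate explicit construction for tightness. The base case $n=2$ is a direct check: either the two axis-parallel segments are disjoint, giving the arrangement at most $2$ edges and hence $|E_F|\le 2\le 4$, or they cross (necessarily one horizontal and one vertical), yielding a star-shaped arrangement with $4$ edges all bounding the single unbounded face, so $|E_F|=4=4\cdot 2-4$. This already witnesses tightness for $n=2$.

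For the inductive step, I would remove a carefully chosen segment $s\in \cS$ and compare $|E_F|$ in $\cA_\cS$ with $|E_{F'}|$ in $\cA_{\cS\setminus\{s\}}$, where $F'$ is the face of the smaller arrangement containing $F$ (possibly the union of $F$ with faces that were separated from $F$ by $s$). The accounting after deletion involves three effects: (i) the sub-edges of $s$ that lay on $\partial F$ vanish; (ii) for every intersection of $s$ with another segment $t$ that sat on $\partial F$, the two adjacent sub-edges of $t$ coalesce, possibly reducing the edge count on the boundary; and (iii) boundary pieces of any faces that merge with $F$ get absorbed into $\partial F'$. If I can choose $s$ so that the net decrease $|E_F|-|E_{F'}|$ is at most $4$, then the inductive hypothesis $|E_{F'}|\le 4(n-1)-4$ yields $|E_F|\le 4n-4$. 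A natural candidate is the segment supporting an extremal point of $\overline{F}$ (e.g.\ the topmost point), since the local structure of $\partial F$ is then very constrained: $F$ lies immediately on one side of $s$, the unbounded region lies on the other side near the extremal point, and one can argue that at most two ``horizontal'' and at most two ``vertical'' transitions occur near $s$, bounding the relevant change.

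The main obstacle is that a single segment can contribute up to $n-1$ sub-edges to $\partial F$ (as the tight construction below shows), so no naive per-segment bound like ``each segment contributes $O(1)$'' suffices; the choice of $s$ and the compensating boundary contributions after merging must be exploited together, or else one has to replace the direct induction by an Euler-formula argument on the planar subgraph $G_F=(V_F,E_F)$ formed by the vertices and edges of $\cA_\cS$ on $\partial F$, bounding $|E_F|$ via the counts of segment endpoints and intersection points appearing on $\partial F$. For tightness when $n\ge 4$, I would take two parallel horizontal segments $h_T,h_B$ and $n-2$ vertical segments $v_1,\dots,v_{n-2}$, each crossing both horizontals and extending slightly above $h_T$ and below $h_B$. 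The two horizontals each contribute all $n-1$ of their sub-edges to the unbounded face (total $2(n-1)$), while the two ``extreme'' verticals $v_1,v_{n-2}$ each contribute all three sub-edges and each interior vertical contributes its two overhang sub-edges (total $3+3+2(n-4)=2(n-1)$), giving exactly $4n-4$ sub-edges on $\partial F$, so the bound is tight (at least for $n=2$ and every $n\ge 4$).
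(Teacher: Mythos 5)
Your tightness construction is fine (it is essentially the paper's grid-like example with two horizontals), but the upper bound is not proved: the proposal is a plan whose central step is exactly the point you flag as ``the main obstacle'' and then leave open. The induction needs a segment $s$ whose removal satisfies $|E_F|-|E_{F'}|\le 4$, and you neither prove that such an $s$ always exists nor does your proposed candidate work. Concretely, take $s$ a long horizontal segment forming the top side of a bounded face $F$, with $m$ vertical segments crossing $s$ and poking only slightly below it into $F$ (and extending far above $s$), and close $F$ off by one horizontal segment $b$ far below and two verticals $L,R$ crossing both $s$ and $b$. Then $n=m+4$, the topmost points of $\overline{F}$ lie on $s$, and $|E_F|=2m+4$ (the $m+1$ sub-edges of $s$, the $m$ stubs, and one edge each from $b,L,R$). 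Deleting $s$ leaves the $v_i$ as isolated single edges and an $H$-shaped arrangement of $b,L,R$, so the face $F'$ containing $F$ is the unique (unbounded) face with $|E_{F'}|=m+7$; the decrease is $m-3$, which is unbounded. So the ``extremal segment'' heuristic fails, and the local ``at most two horizontal and two vertical transitions near $s$'' argument cannot be salvaged as stated, because the lost edges of $s$ along $\partial F$ are not compensated edge-for-edge by the absorbed boundary. Your fallback suggestion (an Euler-type count of endpoints and crossings on $\partial F$) is only a gesture, with no identity or inequality actually derived.

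For comparison, the paper's proof avoids induction entirely: it traverses each closed walk $W_i$ of $\partial F$ and uses a turning-angle identity (a $180^\circ$ positive turn at each segment endpoint, a $90^\circ$ negative turn at each crossing appearance), giving $x_i=2y_i-4$ on inner walks and $x_0=2y_0+4$ on the outer walk, where $x_i$ counts crossing appearances and $y_i$ counts endpoints on $W_i$; combined with the observation that the number of edges along $W_i$ is at most $x_i$ (each edge is charged to a preceding crossing) and $y\le 2n$, this yields $|E_F|\le 4n-4$, with the outer-walk case handled by noting that the four extreme segments contribute no endpoints to $\partial F$, so $y\le 2n-8$ there. If you want to complete your argument, you would either have to prove existence of a segment with bounded net boundary decrease (a nontrivial structural claim, given the example above), or actually carry out a counting argument of the paper's type relating $|E_F|$ to the endpoints and crossings on $\partial F$.
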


\begin{proof}
	By slightly extending every segment we may assume that if two segments intersect, then they intersect in their interiors. This step cannot decrease $|E_F|$.
	
	The boundary of $F$ consists of at most one outer closed walk $W_0$ (which surrounds $F$) and possibly also some inner closed walks $W_1,W_2,\ldots,W_k$ (which are surrounded by $F$). 
	Denote by $x_i$ the number of appearances of intersection points of two segments that one encounters while going along $W_i$, and by $y_i$ the number of segment-endpoints along $W_i$.
	Note that an intersection point might be counted with multiplicity, whereas every endpoint is counted exactly once.
	Let $e_i$ denote the number of edges along $W_i$, such that each edge is counted with multiplicity one, so
	$e_i\le |W_i|$.
	By definition $|E_F|=\sum_{i=0}^k e_i$.
	
	Clearly, if $|W_i|=2$, i.e., $W_i$ consists of a single edge, then $e_i=1 \le 4 = 2y_i$.
	Otherwise, $e_i \le x_i$. 
	Indeed, associate every edge along $W_i$ with its preceding vertex (thus, a cut-edge is associated with two vertices).
	If the preceding vertex of an edge $e$ is a segment-endpoint, then the vertex before that endpoint is an intersection point of two segments which is also associated with $e$.
	Therefore, every edge is associated with an intersection point and every intersection point is associated to as many edges as its number of appearances along the walk to which it belongs.
		
	Suppose that we traverse an inner walk $W_i$ such that $F$ is to our left.
	Then at each endpoint we turn $180^\circ$ in the positive direction and at each intersection point we turn $90^\circ$ in the negative direction.
	As the sum of positive and negative turns should be $360^\circ$, we have that $x_i=2y_i-4$, thus $e_i\le 2y_i-4$.
	If $W_0$ exists, then a similar calculation gives $x_0=2y_0+4$, thus $e_0\le 2y_0+4$.
	
	Let $y = \sum_{i=0}^k y_i$ be the number of segment-endpoints on the boundary of $F$.
	If there is no outer walk, then either there are no intersection points, in which case $|E_F| \le n \le 4n-4$, since $n>1$. Otherwise, for at least one index $j$ we have $|W_j|>2$. In this case $|E_F|=\sum_{i=0}^k e_i \le 2y_j-4 + \sum_{i \ne j} 2y_i \le 2y-4$. As $y \le 2n$ we get that $|E_F| \le 4n-4$.
	
	If there is an outer walk,	then we can only conclude that $|E_F| \le 2y+4$. However, in this case the topmost and bottommost horizontal segments and the leftmost and rightmost vertical segments do not contribute any endpoints to $y$, thus $y \le 2n-8$ and therefore $|E_F| \le 4n-12$. 
	
	\medskip
	It is easy to see that this bound is tight by considering the outer face in an arrangement of $h$ ($1 < h < n-1$) horizontal segments and $n-h$ vertical segments in a grid-like arrangement.
\end{proof}

The last ingredient needed for proving Lemma~\ref{lem:keller-better} is the following. 

\begin{lem}\label{lem:3-curve-sets}
	Let $\cS$ be a set of red and blue curves, such that any two curves of the same color do not intersect.
	Suppose that $\cC$ is a set of pairwise disjoint curves such that each curve in $\cC$ intersects exactly a distinct pair of curves from $\cS$ (each of them possibly several times) and there is a face $F$ of the arrangement $\cA_{\cS}$ that is intersected by every curve in $\cC$.
	Then $|\cC| \le 5|\cS| + 2|F|-3$.
\end{lem}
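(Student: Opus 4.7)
The plan is to decompose $\cC$ based on whether each curve's pair of intersected $\cS$-curves have the same or different colors, and to bound each part separately.

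For same-color pairs, I would apply Lemma~\ref{lem:planar} to the red sub-family $(\cS_R, \cC_{RR})$ and to the blue sub-family $(\cS_B, \cC_{BB})$, where $\cC_{RR}, \cC_{BB}\subseteq\cC$ are the curves intersecting two red or two blue curves. Since each such $c$ intersects exactly two curves from the pairwise disjoint $\cS_R$ (resp.\ $\cS_B$) and no curve of the other color, the resulting Delaunay graphs are planar, giving at most $3|\cS_R|+3|\cS_B|\le 3|\cS|$ same-color pairs.

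For the different-color part $\cC_{RB}$, each $c$ intersects one red $r_c\in\cS_R$ and one blue $b_c\in\cS_B$. The key geometric observation is that for every $c\in\cC_{RB}$ at least one of $r_c,b_c$ must bound $F$: either $c\subseteq\bar F$, in which case both intersections of $c$ with $\cS$ must lie on $\partial F$ (since $r_c,b_c$ do not enter the interior of $F$), or $c$ must cross $\partial F$, and such a crossing can only occur at an intersection of $c$ with $r_c$ or $b_c$ (the only $\cS$-curves $c$ meets). For each $c\in\cC_{RB}$ I would select a maximal sub-arc $\alpha_c\subseteq c$ whose interior lies in $F$; each endpoint of $\alpha_c$ is either on $\partial F$ (at an intersection with $r_c$ or $b_c$) or an endpoint of $c$ in $F$, and the $\alpha_c$'s are pairwise disjoint in $\bar F$.

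The core step is to bound the $\alpha_c$'s by building a plane multigraph $G$ in $\bar F$: contract each curve of $B_F=\{s\in\cS\mid s\text{ bounds }F\}$ to a single vertex (valid since $\bar F$ is topologically a disk and the $B_F$-curves meet $\partial F$ in a nicely ordered cyclic way), add endpoints of $c$'s in $F$ as pendant vertices, and take the $\alpha_c$'s as edges. Every chord $\alpha_c$ with both endpoints on $\partial F$ either connects a red $B_F$-vertex to a blue one (since $r_c,b_c$ have different colors) or is a loop at a single $B_F$-vertex, and distinct $c$'s produce distinct pairs $(r_c,b_c)$, ruling out multi-edges. The bipartite-planar bound $|E|\le 2|V|-4$ then controls the non-loop chord part on $B_F$-vertices by $2|B_F|-4\le 2|F|-4$. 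The remaining edges—loops at a single $B_F$-vertex, tails ending at a free endpoint in $F$, and free arcs lying entirely in $\bar F$—are handled by an auxiliary planarity argument that tracks $c$ outside $\bar F$, where the remainder of $c$ still meets its second $\cS$-curve and can be charged via Lemma~\ref{lem:planar} to a complementary structure, contributing an additional $O(|\cS|)$. Combining the pieces gives $|\cC_{RB}|\le 2|\cS|+2|F|-O(1)$, and together with the $3|\cS|$ bound from same-color pairs yields $|\cC|\le 5|\cS|+2|F|-3$.

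The main obstacle I anticipate is the careful treatment of loops and free arcs, since these fall outside the bipartite-planar framework and a priori many loops could accumulate at a single contracted $B_F$-vertex. The crux will be to leverage the uniqueness of the pairs $(r_c,b_c)$ together with the geometry of $c$ outside $\bar F$ so that loops, tails, and free arcs contribute only $O(|\cS|)$ to the final count, rather than growing with $|F|^2$ or with the number of $c$-endpoints in $F$.
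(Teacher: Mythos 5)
Your decomposition into same-color pairs (bounded via Lemma~\ref{lem:planar}) and red--blue pairs is the same first step as the paper, and your bound on proper chords with both endpoints on $\partial F$ via bipartite planarity is sound as far as it goes. The genuine gap is the bucket you defer to an ``auxiliary planarity argument'': loops at a single contracted vertex, tails, and the parts of $c$ living outside $\bar F$. This is not a marginal case---in the worst case it is all of $\cC_{RB}$: every red--blue curve $c$ may meet $F$ only in points of a red edge (entering and leaving $F$ through $r_c$, or merely touching $\partial F$), with its blue partner $b_c$ not bounding $F$ at all, so your plane multigraph consists entirely of loops at one $B_F$-vertex and records nothing about the distinct blue partners. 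The uniqueness of the pairs $(r_c,b_c)$ by itself does not bound the number of such curves, and no argument is actually given for the claimed additional $O(|\cS|)$; as you note yourself, this is precisely the crux. (Also, your justification for contracting the $B_F$-curves---that $\bar F$ is a disk and the curves meet $\partial F$ in a cyclically ordered way---is false in general: the boundary of $F$ can have several connected components, and a single curve can appear in several non-consecutive boundary intervals; the contraction should be performed in the whole plane, as in Lemma~\ref{lem:planar}, not inside $\bar F$.)

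The paper's proof sidesteps this entirely by never restricting attention to $\bar F$. For each curve $c$ that meets a red edge of $F$ and a blue curve, it selects a sub-curve of $c$ with one endpoint on a red edge of $F$, the other endpoint on the blue curve $b_c$ (wherever that curve lies, bounding $F$ or not), and interior disjoint from everything else; it then applies Lemma~\ref{lem:planar} to the family $E_r \cup \cS_b$, in which every red edge of $F$ is treated as a separate vertex, obtaining a bipartite planar Delaunay graph with at most $2(|E_r|+|\cS_b|)$ edges, and symmetrically at most $2(|E_b|+|\cS_r|)$ for curves meeting a blue edge of $F$ and a red curve. Together with the $3|\cS|-3$ same-color bound this gives exactly $5|\cS|+2|F|-3$. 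That is the idea your sketch is missing: taking the edges of $F$ of one color plus the full curves of the other color as the vertex set handles your loop/tail/free-arc cases uniformly, and it is also how the term $2|F|$ legitimately enters the bound.
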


\begin{proof}
	It follows from Lemma~\ref{lem:planar} that there are at most $3|\cS|-3$ curves in $\cC$ such that each of them intersects two curves of the same color.
	
	Denote by $E_r$ the red edges of $F$ and let $\cC_{rb} \subseteq \cC$ be the curves that intersect a red edge of $F$ and a blue curve.
	First we will bound $|\cC_{rb}|$.
	Remove every red edge of $\cA_{\cS}$ which is not in $E_r$.
	Note that this might break a red curve into several pieces.
	Now we eliminate the possible multiple crossings of $\cC_{rb}$ with members of $\cS$.
	Every curve in $\cC_{rb}$ has a sub-curve such that one of its endpoints is on an edge from $E_r$, its other endpoint is on a blue curve and its interior does not intersect any other curve or edge.
	Pick such a sub-curve for every curve in $\cC_{rb}$ and denote this set of (sub-)curves by $\cC'_{rb}$.
	
	Clearly $\cD(\cS, \cC_{rb})$ has as many edges as $\cD(E_r \cup \cS_b, \cC'_{rb})$, where $\cS_b \subseteq \cS$ 
	 denotes the set of blue 
	curves.
	Observe also that it follows from Lemma~\ref{lem:planar} that $\cD(E_r \cup \cS_b, \cC'_{rb})$ is a bipartite planar graph.
	Thus, $\cD(E_r \cup \cS_b, \cC'_{rb})$ has at most $2(|E_r|+|\cS_b|)$ edges.
	
	By applying the same argument for the similarly defined curves $\cC_{br} \subseteq \cC$ that intersect a blue edge of $F$ and a red curve, we conclude that $|\cC| \le 2(|E_r|+|\cS_b|)+2(|E_b|+|\cS_r|) + 3|\cS|-3 = 5|\cS|+2|F|-3$.
\end{proof}

Lemma~\ref{lem:keller-better} now follows from Lemma~\ref{lem:segments-cell-complexity} and Lemma~\ref{lem:3-curve-sets}. From these we get that the number of edges in  $\cD(\cS,\cC)$ is upper bounded by $5|\cS|+2|F|\le 5|\cS|+2(4|\cS|-4)=13|\cS|-11$.

\subsection{An application for conflict-free coloring of L-shapes}
\label{sec:cf}

In this section we outline the proof of the upper bound of Theorem~\ref{thm:keller} from~\cite{keller2020conflictfree}, and indicate how using Theorem~\ref{thm:groundedcurves} (or just Lemma~\ref{lem:keller-better}) improves on this by a $\log n$ factor, proving Theorem~\ref{thm:cf-grounded}.

The proof in~\cite{keller2020conflictfree} uses a divide-and-conquer approach.
The family of $n$ grounded L-shapes is partitioned with respect to some vertical line into three sets, $\cL_1$, $\cL_2$ and $\cL_3$, such that: $|\cL_1|, |\cL_3| \le n/2$; no L-shape from $\cL_1$ intersects an L-shape from $\cL_3$; each L-shape in $\cL_i$ intersects some L-shape in $\cL_i$, for $i=1,2,3$; and $\chi_{\rm CF}(\cH(\cL_2)) = O(\log^2 n)$.
By using the same set of colors for $\cL_1$ and $\cL_3$, and applying induction the $O(\log^3 n)$ upper bound on $\chi_{\rm CF}(\cH(\cL))$ follows.

In order to show that $\chi_{\rm CF}(\cH(\cL_2)) = O(\log^2 n)$, Keller et al.~\cite{keller2020conflictfree} show that $\chi(\cH(\cL'_2)) = O(\log |\cL'_2|)$
for any $\cL'_2 \subseteq \cL_2$ and rely on the following known result.

\begin{lem}\label{lem:weak-coloring}\cite{Har-PeledS05,keller2020conflictfree}
	If an $n$-vertex hypergraph $\cH$ as well as any induced sub-hypergraph of $\cH$ can be properly colored with $t$ colors, then $\chi_{\rm CF}(\cH) = O(t\log n)$.
\end{lem}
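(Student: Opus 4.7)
The plan is to adapt the standard iterative ``palette-peeling'' scheme. I will construct a decreasing sequence of vertex sets $V=V_1\supseteq V_2\supseteq\cdots$ together with a palette of fresh colors $\kappa_1,\kappa_2,\ldots$ as follows. At step $i$, I invoke the hypothesis on the sub-hypergraph of $\cH$ induced by $V_i$ to obtain a proper $t$-coloring of it, select a largest color class $C_i\subseteq V_i$ (so that $|C_i|\ge |V_i|/t$), assign every vertex of $C_i$ a brand-new color $\kappa_i$ that has not appeared in any earlier step, and set $V_{i+1}=V_i\setminus C_i$. The process halts when $V_i=\emptyset$.

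From $|V_{i+1}|\le (1-1/t)|V_i|$ I immediately get that the process terminates after at most $\lceil t\ln n\rceil$ steps, so the total number of colors used is $O(t\log n)$, matching the target bound.

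It remains to verify that the resulting coloring is conflict-free. Fix any hyperedge $e\in\cE$ and let $i^\ast$ be the largest index with $e\cap C_{i^\ast}\ne\emptyset$. By maximality of $i^\ast$, no vertex of $e$ is still uncolored after step $i^\ast$, hence $e\cap V_{i^\ast}\subseteq C_{i^\ast}$. Now $e\cap V_{i^\ast}$ is a hyperedge of the sub-hypergraph induced on $V_{i^\ast}$; were it of size at least two, the proper $t$-coloring used at step $i^\ast$ would split it across at least two color classes, contradicting $e\cap V_{i^\ast}\subseteq C_{i^\ast}$. Therefore $|e\cap V_{i^\ast}|=1$, and the single vertex in this intersection receives the fresh color $\kappa_{i^\ast}$. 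By construction $\kappa_{i^\ast}$ was used only at step $i^\ast$ and hence on no other vertex of $e$ (the previously colored vertices of $e$ carry colors $\kappa_i$ with $i<i^\ast$).

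I do not foresee a genuine obstacle here: the main subtle point is to recognise that the assumption applying to every \emph{induced} sub-hypergraph (rather than only to $\cH$ itself) is exactly what rules out the bad case $|e\cap V_{i^\ast}|\ge 2$. Once that is accepted, the argument reduces to a textbook geometric-series calculation; the only care needed is to ensure that a truly fresh color is introduced at each round, so that $\kappa_{i^\ast}$ is genuinely unique inside $e$.
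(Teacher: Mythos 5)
Your proof is correct and is essentially the standard argument behind this lemma: the paper does not prove it but cites it (Har-Peled--Smorodinsky and Keller--Rok--Smorodinsky), and the proof in those sources is exactly your iterative scheme of properly $t$-coloring the induced (trace) sub-hypergraph, recoloring a largest color class with a fresh color, and deleting it, which terminates in $O(t\log n)$ rounds. Your key step---observing that the trace $e\cap V_{i^\ast}$ of a hyperedge lies inside one color class of a proper coloring and hence must be a singleton, which then carries the fresh color $\kappa_{i^\ast}$ uniquely within $e$---is precisely the intended justification.
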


The proper coloring of $\cH(\cL_2)$ (and its induced sub-hypergraphs) is obtained by representing $\cL_2$ as the union of four subsets, with their notation $\cL_2=\cI \cup \cF_M \cup \cV'_R \cup \cV'_L$,
where $\cI$ is a set of pairwise disjoint L-shapes such that each of them intersects exactly two other L-shapes.
Then it is basically shown that $\chi(\cH(\cL_2, \cF_M \cup \cV'_R \cup \cV'_L)) = O(1)$ and that $\chi(\cH(\cL_2, \cI)) = \chi(\cD(\cL_2, \cI)) = O(\log n)$, thus concluding that $\chi(\cH(\cL_2)) = O(\log n)$.
The bound $\chi(\cD(\cL_2, \cI)) = O(\log n)$ follows from the $O(n \log n)$ bound on the number of edges in $\cD(\cL_2, \cI)$ as stated in Lemma~\ref{lem:keller}.
By substituting this bound with our linear upper bound that follows from Lemma~\ref{lem:keller-better} or Theorem~\ref{thm:groundedcurves}, we get that $\chi(\cD(\cL_2, \cI))$ is upper bounded by a constant and therefore so is $\chi(\cH(\cL_2))$.
Then by Lemma~\ref{lem:weak-coloring} we have $\chi_{\rm CF}(\cH(\cL_2)) = O(\log n)$ and the bound $\chi_{\rm CF}(\cH(\cL)) = O(\log^2 n)$ follows by induction as before.

\section{Proof of Theorem~\ref{thm:groundedcurves}}
\label{sec:groundedcurves}

In this section we prove Theorem~\ref{thm:groundedcurves}.
We will use the following bound on \emph{Davenport-Schinzel sequences}.
\begin{lem}[\cite{DS-book}]
	\label{lem:DS}
	Let $S$ be a sequence that consists of $n$ symbols, such that no two consecutive symbols are the same and $S$ does not contain a sub-sequence of the form $a,b,a,b$.
	Then the length of $S$ is at most $2n-1$.
\end{lem}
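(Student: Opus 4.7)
The plan is to prove the $2n-1$ bound by induction on the number of distinct symbols $n$. The base case $n=1$ is immediate: since no two consecutive symbols may coincide, the sequence has length exactly $1 = 2\cdot 1 - 1$. For the inductive step I would reduce to a sequence on $n-1$ symbols by locating a symbol that occurs \emph{exactly once} in $S$, deleting it, and applying the induction hypothesis. Deleting one occurrence can create at most one pair of equal consecutive symbols (when the removed occurrence sits strictly between two identical letters); if this happens I also delete one of those two copies. The resulting sequence $S'$ uses $n-1$ symbols, has no two equal consecutive symbols, avoids $abab$ (subsequence-avoidance is preserved under deletion), and has length at least $|S|-2$. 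By induction $|S'|\le 2(n-1)-1$, giving $|S| \le 2n-1$.

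The whole argument therefore reduces to the extremal claim that is the heart of the proof: \emph{if $n\ge 2$ and $S$ is $abab$-free with no equal consecutive symbols, then some symbol appears exactly once in $S$.} I would establish this by contradiction, assuming every symbol occurs at least twice. Among all symbols, let $a$ be one whose \emph{second} occurrence is at the leftmost position, call it $j$, and let $b$ be the symbol sitting at position $j-1$; we have $b\neq a$ because consecutive symbols differ.

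The next task is to pin down the occurrences of $b$. Position $j-1$ must be the \emph{first} occurrence of $b$, for if $b$ had already occurred earlier then the appearance at $j-1$ would be at least its second occurrence, contradicting the minimality of $j$. Since $b$ occurs at least twice by assumption, its second occurrence is at some position $k$; by our extremal choice of $a$ we have $k\ge j$, and since position $j$ holds $a$ we actually get $k>j$. On the other side, the first occurrence $i_1$ of $a$ satisfies $i_1<j$, and $i_1\neq j-1$ since position $j-1$ is $b$, so $i_1<j-1$. The four positions $i_1 < j-1 < j < k$ then carry the symbols $a,b,a,b$, contradicting the hypothesis and completing the proof of the key claim.

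The only step I expect to require genuine care is this extremal argument: both choosing the correct symbol to focus on (the one whose \emph{second} occurrence is leftmost, rather than, say, the one whose first occurrence is leftmost) and verifying that the four indices $i_1 < j-1 < j < k$ are strictly ordered and distinct. Once this claim is in place, the deletion/merging bookkeeping and the induction are routine.
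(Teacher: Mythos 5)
Your proof is correct. Note that the paper does not prove Lemma~\ref{lem:DS} at all---it is quoted from the standard reference on Davenport--Schinzel sequences---so there is no in-paper argument to compare against; what you give is essentially the classical proof that $\lambda_2(n)=2n-1$. The two delicate points both check out: the extremal claim (choosing the symbol $a$ whose \emph{second} occurrence is leftmost, which forces the letter $b$ at position $j-1$ to be at its first occurrence and to recur after $j$, yielding the forbidden $a,b,a,b$ at positions $i_1<j-1<j<k$), and the deletion bookkeeping (removing the unique occurrence and, if needed, one of the two now-adjacent equal neighbours cannot create a further adjacent repetition, since the neighbour on the far side differed from the deleted copy already in $S$), so the induction $|S|\le |S'|+2\le 2(n-1)-1+2=2n-1$ is sound.
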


We will also need the following fact.

\begin{prop}\label{prop:diff-consecutive}
Let $S_1=a_1,\ldots,a_n$ and $S_2=b_1,\ldots,b_n$ be two sequences, such that there is no index $j$ for which $a_j=a_{j+1}$ and $b_j=b_{j+1}$ (that is, if $a_{j}=a_{j+1}$, then $b_j \ne b_{j+1}$ and vice versa).
Then there is $i \in \{1,2\}$ such that $S_i$ contains a sub-sequence of length at least $\lceil (n+1)/2 \rceil$ in which every element is different from its preceding element in the sub-sequence.  
\end{prop}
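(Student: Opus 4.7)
The plan is to reduce the problem to counting \emph{runs} of consecutive equal values in each of $S_1$ and $S_2$, and then use the disjointness forced by the hypothesis to conclude that one of the two counts is large.

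Concretely, I would define
\[
A = \{\, j \in \{1,\ldots,n-1\} : a_j = a_{j+1} \,\}, \qquad B = \{\, j \in \{1,\ldots,n-1\} : b_j = b_{j+1} \,\}.
\]
The hypothesis says precisely that $A \cap B = \emptyset$, hence $|A| + |B| \le n-1$ and therefore $\min(|A|,|B|) \le (n-1)/2$. Without loss of generality assume $|A| \le (n-1)/2$.

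Next I would observe that the number of maximal blocks of consecutive equal values in $S_1$ equals $1 + |\{j : a_j \ne a_{j+1}\}| = n - |A|$, because each $j \notin A$ contributes one block transition. Picking any one index from each such block produces a subsequence of $S_1$ whose length equals the number of blocks, namely $n - |A|$, and in which every element differs from its predecessor by construction. Since $|A| \le (n-1)/2$, this length is at least $n - (n-1)/2 = (n+1)/2$, and being an integer it is at least $\lceil (n+1)/2 \rceil$. If instead $|B| \le (n-1)/2$, the same argument applied to $S_2$ produces such a subsequence of $S_2$.

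There is no real obstacle: the only thing to be careful about is the ceiling, which I would verify by considering the parity of $n$ (when $n$ is even, $|A|+|B| \le n-1$ together with integrality forces $\min(|A|,|B|) \le n/2 - 1$, yielding at least $n/2 + 1 = \lceil (n+1)/2 \rceil$ blocks in one of the sequences; when $n$ is odd the bound $(n+1)/2$ is already an integer).
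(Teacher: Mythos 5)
Your proof is correct and is essentially the paper's argument in static form: the paper greedily extracts from $S_1$ and $S_2$ subsequences consisting exactly of the first entries of the maximal constant blocks, and its key claim (at every step at least one subsequence is extended) is precisely the disjointness of your sets $A$ and $B$. Your count $n-|A|$ of blocks together with $|A|+|B|\le n-1$ is the same bound as the paper's statement that the two extracted subsequences have total length at least $n+1$, followed by the same pigeonhole and integrality step.
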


\begin{proof}	
We construct two sub-sequences $S'_1$ and $S'_2$ in an incremental and greedy manner.
First, $S'_1$ contains only $a_1$ and $S'_2$ contains only $b_1$.
Then, for every $j>1$ we append $a_j$ (resp., $b_j$) to the sub-sequence $S'_1$ (resp., $S'_2$) if it is different from the last element that was added to the sub-sequence.
Clearly, for every $j>1$ at least one sub-sequence is extended, otherwise there is $j$ such that $a_j=a_{j+1}$ and $b_j=b_{j+1}$.
Therefore, the total length of $S'_1$ and $S'_2$ is at least $2+(n-1)$ and thus
one of them is of length at least $\lceil (n+1)/2 \rceil$.
\end{proof}

Let $\cS$ be a set of $n$ red and blue curves such that no two curves of the same color intersect.
Suppose that $\cC$ is a set of pairwise disjoint curves grounded with respect to $\cS$, such that each of them intersects exactly a distinct pair of curves from $\cS$.
Recall that we wish to show that $|\cC| = O(n)$.
Note that we cannot use Lemma~\ref{lem:3-curve-sets} since the size of a face in $\cA_{\cS}$ can be arbitrarily large.

Observe first that it follows from Lemma~\ref{lem:planar} that there are at most $3n-3$ curves in $\cC$ such that each of them intersects two curves in $\cS$ of the same color.
We thus discard such curves from $\cC$ and assume henceforth that every curve in $\cC$ connects two curves from $\cS$ of different colors.

Let $F$ be a face of $\cA_{\cS}$ such that every curve in $\cC$ intersects its boundary.
By trimming every curve in $\cC$ if necessary, we may assume that each such curve has one of its endpoints on an edge of the boundary of $F$ that belongs to a red (resp., blue) curve $s \in \cS$, its other endpoint is on a blue (resp., red) curve $s' \in \cS$, and its interior does not intersect any curve in $\cC \cup \cS$ except for possibly $s$ and $s'$.

Let $\cC_1 \subseteq \cC$ be the curves with exactly one endpoint on the boundary of $F$ and let $\cC_2 = \cC \setminus \cC_1$ be the curves with two endpoints on the boundary of $F$.
Note that we may assume that the interior of every curve $c_1 \in \cC_1$ intersects at most one curve, namely the curve that contains that edge of $F$ that $c_1$ intersects. 
Furthermore, we may assume that the interior of every curve $c_2 \in \cC_2$ does not intersect any curve (including $s$ and $s'$, as defined above). Indeed, otherwise $c_2$ contains a sub-curve that qualifies for $\cC_1$ and we may replace $c_2$ with this sub-curve (see Figure~\ref{fig:C_1C_2} for an illustration).
\begin{figure}
\begin{center}
	\includegraphics[width= 8cm]{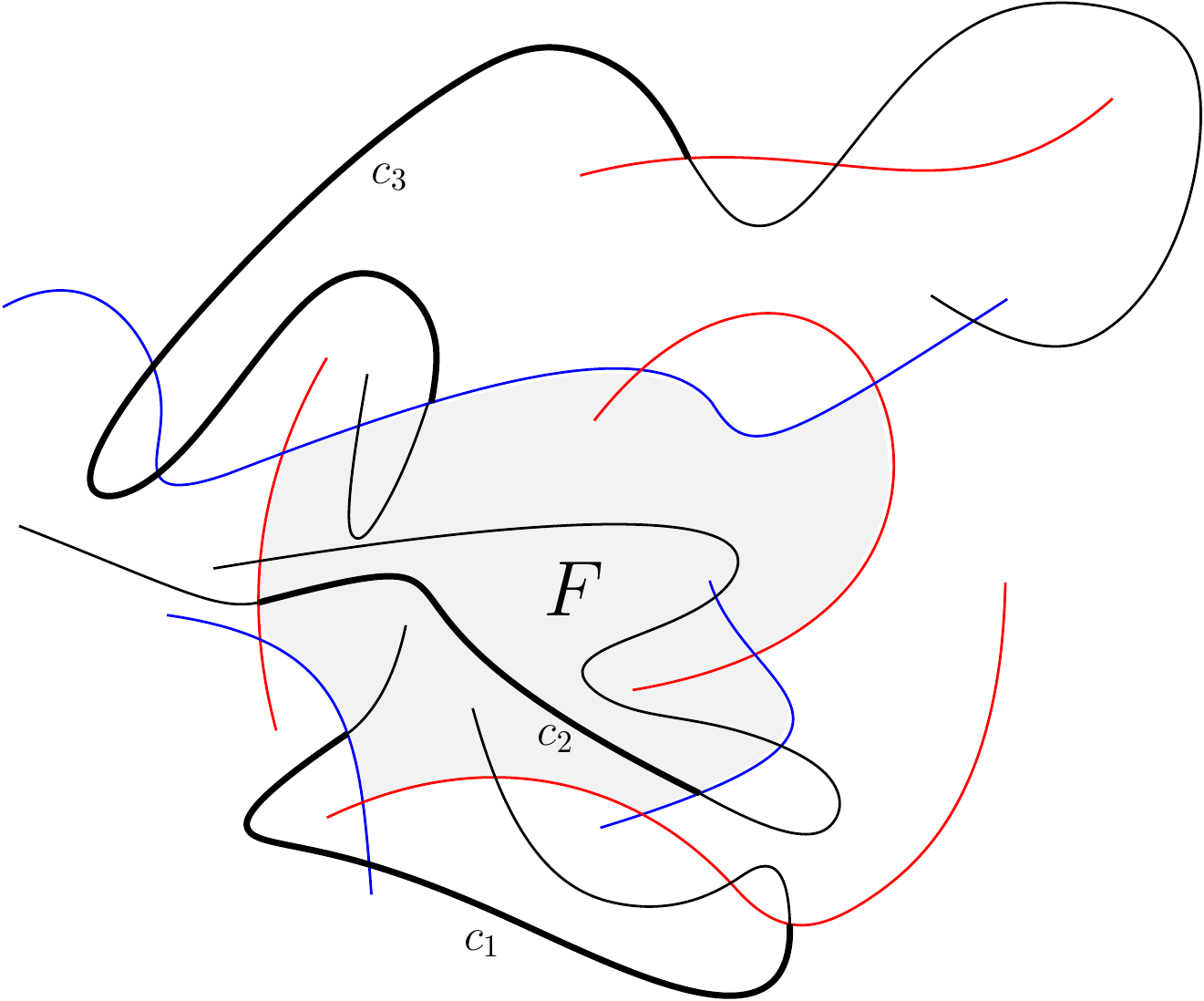}
	\caption{The sub-curves $c_1$ and $c_3$ belong to $\cC_1$ whereas $c_2 \in \cC_2$.}
	\label{fig:C_1C_2}
\end{center}
\end{figure}

\begin{lem}\label{lem:C_1}
	$|\cC_1| \le 8n-4$.
\end{lem}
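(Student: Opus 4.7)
The plan is to partition $\cC_1$ into four sub-collections and bound each by $2n-1$ via a Davenport--Schinzel-type argument, yielding $|\cC_1| \le 4(2n-1) = 8n-4$ by Lemma~\ref{lem:DS}. The partition is based on two binary attributes of each $c \in \cC_1$: (i)~the color of the curve $s \in \cS$ whose edge on $\partial F$ contains the $\partial F$-endpoint of $c$ (red or blue), and (ii)~the initial direction of $c$ at this endpoint (into $F$ or out of $F$).

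For each of the four classes, I would traverse $\partial F$ cyclically and associate to each $c$ in the class the curve $s'$ carrying $c$'s other endpoint, producing a cyclic sequence of labels drawn from at most $n$ symbols. The key claim is that within each class this sequence admits no alternating subsequence $s'_a, s'_b, s'_a, s'_b$ with $s'_a \ne s'_b$; Lemma~\ref{lem:DS} (after cutting the cycle at a fixed point) then bounds its length by $2n-1$. To prove no such alternation, suppose four curves $c_1, c_2, c_3, c_4$ in one class, listed in cyclic order along $\partial F$, have targets $s'_a, s'_b, s'_a, s'_b$. I would form a Jordan curve $\gamma$ by concatenating $c_1$, a suitable arc of $s'_a$ between the $s'_a$-endpoints of $c_1$ and $c_3$, the curve $c_3$, and the arc of $\partial F$ between their $\partial F$-endpoints that contains $c_4$'s endpoint. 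The uniform color of $s$ and uniform direction within the class ensure the four curves are consistently embedded on the same side of $\partial F$, so $c_2$---being disjoint from $c_1, c_3$ and, by the color constraint together with the trimming that restricts its interior to meet only its own $s$---has interior disjoint from $s'_a$ and from the relevant portion of $\partial F$; hence $c_2$ cannot connect its $\partial F$-endpoint to $s'_b$ without crossing $\gamma$, a contradiction.

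The main obstacle is making this Jordan-curve step rigorous. The interior of $c \in \cC_1$ may cross $s$ multiple times, so $c$ may re-enter $F$ and wind in a topologically complex way (especially delicate in the two ``into $F$'' classes); the target curves $s'_a, s'_b$ may themselves meet $\partial F$ at many points, making it nontrivial to identify the appropriate arc of $s'_a$; and consecutive identical labels in the cyclic sequence must be absorbed into the bound, either by bounding consecutive same-target runs separately via a similar topological argument, or by merging them before applying Lemma~\ref{lem:DS}. The class partition by color and direction is precisely what is needed to constrain these difficulties to a tractable form and force the four candidate curves into a configuration in which the Jordan-curve obstruction is effective.
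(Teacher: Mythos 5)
There is a genuine gap, and it is exactly at the point you flag as an ``obstacle'': the treatment of consecutive identical labels. Your plan reduces each of the four classes to a single cyclic sequence recording only the \emph{target} curve $s'$ of each $c\in\cC_1$, and then hopes to apply Lemma~\ref{lem:DS}. But Lemma~\ref{lem:DS} requires no two consecutive symbols to be equal, and within one of your classes many consecutive curves may share the same target $s'$ while attaching to distinct boundary-side curves $s$ (these are distinct pairs, so nothing in the hypotheses forbids it). Merging such repetitions before applying Lemma~\ref{lem:DS} only bounds the number of \emph{runs}, not the number of curves, and no argument is offered (nor is one apparent) that run lengths are bounded by a constant; a priori a run can have length comparable to $n$. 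This is precisely the difficulty the paper's proof is built to overcome: for each endpoint $p_i$ on the boundary it records \emph{both} sequences, the targets $b_1,\dots,b_k$ and the boundary-side curves $r_1,\dots,r_k$, observes that no index can repeat in both simultaneously (distinctness of pairs), and invokes Proposition~\ref{prop:diff-consecutive} to extract from one of the two sequences a repetition-free subsequence of length at least $k/2$; two separate no-$abab$ propositions (one for the target sequence, one for the boundary-side sequence) then make Lemma~\ref{lem:DS} applicable. Your partition by color and initial direction does not substitute for this two-sequence bookkeeping, and the ``direction'' attribute buys nothing once the curves are trimmed so that their interiors avoid the interior of $F$.

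Two secondary problems would also need repair. First, the boundary of $F$ may have several connected components, so ``cyclic order along $\partial F$'' is not globally defined and the Jordan-curve separation only makes sense for four endpoints on one component; the paper first reduces to a single component by showing that each $s\in\cS$ (and hence its connecting curves) can interact with only one component. Second, your Jordan curve $\gamma$ incorporates an arc of $\partial F$ itself; but the target curve $s'_b$ of the offending curves may contribute edges to exactly that arc, so $s'_b\cup c_2\cup c_4$ can meet $\gamma$ without crossing it and no contradiction results (also, the contradiction must be derived for the whole connector $c_2\cup(\text{subarc of }s'_b)\cup c_4$ between the two separated endpoints, not for $c_2$ alone). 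The paper avoids this by closing the curve through the \emph{interior} of $F$, which no curve of $\cS$ and no trimmed connecting curve can enter, making the disjointness argument clean.
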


\begin{proof}
	We can assume that the interior of each curve in $\cC_1$ is disjoint from the interior of $F$, otherwise we could take one of its sub-curves.
The boundary of $F$ may consist of several connected components. 
Let $s$ be a curve in $\cS$.
Clearly, it is impossible that $s$ contributes edges to two different components.
Furthermore, it is also impossible that there are curves $c,c' \in \cC_1$ that connect $s$ to two points on different connected components of $F$, since one of $c$ and $c'$ must then intersect the boundary of $F$ at two points.   
Therefore, it is enough to consider one connected component.
That is, let $\cC'_1 \subseteq \cC_1$ be the curves with endpoints on a specific connected component of the boundary of $F$ and let $\cS' \subseteq \cS$ be the curves that intersect at least one curve from $\cC'_1$.
Then it is enough to show that $|\cC'_1| = O(|\cS'|)$, since by summing for every connected component of the boundary of $F$ we get $|\cC_1| = O(|\cS|)$.

Let $\cC''_1 \subseteq \cC'_1$ be the curves whose endpoints on $F$ belong to red curves. We may assume without loss of generality that $|\cC''_1| \ge |\cC'_1|/2$.
Let $p_1,p_2,\ldots,p_k$ be the endpoints of the curves in $\cC''_1$ on $F$ listed in their cyclic order along the boundary of $F$.
For $i=1,2,\ldots,k$, let $c_i \in \cC''_i$ be the curve whose endpoint is $p_i$ and let $r_i$ and $b_i$ be the red and blue curves, respectively, that are connected by $c_i$.
Observe that it is possible that $b_i=b_j$ or $r_i=r_j$ for $i \ne j$. 
However, it is clearly impossible that for some $i$ we have $b_i=b_{i+1}$ and $r_{i}=r_{i+1}$, for then $c_i$ and $c_{i+1}$ connect the same pair of curves.
Therefore, it follows from Proposition~\ref{prop:diff-consecutive} that one of sequences $S_1=b_1,b_2,\ldots,b_k$ and $S_2=r_1,r_2,\ldots,r_k$ contains a sub-sequence of length at least $\lceil (k+1)/2 \rceil \ge k/2$.

\begin{prop}
	$S_1$ does not contain a sub-sequence $b_i,b_j,b_{i'},b_{j'}$ such that $b_i=b_{i'}\ne b_j=b_{j'}$.
\end{prop}

\begin{proof}
Suppose for contradiction that such a sub-sequence exists.
Let $\alpha_i$ be the 
curve that consists of $c_i, c_{i'}$ and the sub-curve of $b_i$ between the endpoints of $c_i$ and $c_{i'}$ on $b_i$.
Define $\alpha_j$ analogously. 
Consider a closed curve $c'$ that consists of $\alpha_i$ and a curve within $F$ that connects $p_i$ and $p_{i'}$.
Due to the alternating sub-sequence of symbols, the points $p_{j}$ and $p_{j'}$ lie on different sides of $c'$.
It follows that $\alpha_j$, which connects these two points, must cross $c'$, which is impossible since $b_i$ and $b_j$ do not intersect and the interiors of the curves in $\cC$ are intersection-free.
\end{proof}

\begin{prop}\label{prop:no-ABAB-on-boundary}
	$S_2$ does not contain a sub-sequence $r_i,r_j,r_{i'},r_{j'}$ such that $r_i=r_{i'}\ne r_j=r_{j'}$.
\end{prop}

\begin{proof}
	Suppose for contradiction that such a sub-sequence exists.
	Consider a closed curve $c'$ that consists of the subcurve of $r_i$ between $p_i$ and $p_{i'}$ and a curve within $F$ that connects $p_i$ and $p_{i'}$.
	Due to the alternating sub-sequence of symbols, the points $p_{j}$ and $p_{j'}$ lie on different sides of $c'$.
	It follows that $r_j$, which connects these two points, must cross $c'$, which is impossible.
\end{proof}

It follows from Lemma~\ref{lem:DS} that $k/2 \le 2|\cS'|-1$ and hence $|\cC''_1| \le 4|\cS'|-2$ and $|\cC_1| \le 8n-4$.
\end{proof}

\begin{lem}\label{lem:C_2}
	$|\cC_2| \le 22n-18$.
\end{lem}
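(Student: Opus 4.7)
The plan is to adapt the proof scheme of Lemma~\ref{lem:C_1} to the case where each chord in $\cC_2$ has both of its endpoints on $\partial F$. First, I would split $\cC_2$ according to whether the two endpoints of a chord lie on the same connected component of $\partial F$ or on different components. For chords whose endpoints span two different components, a planarity argument (after contracting each connected component of $\partial F$ to a single vertex and using the distinctness of $(r_c,b_c)$ pairs) gives an $O(n)$ bound. It then remains to handle chords with both endpoints on the same component, summing the per-component bound.

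Fix one such component and list the endpoints of these chords in cyclic order. Each chord $c$ contributes a red endpoint $R_c$ (on the red curve $r_c \in \cS$) and a blue endpoint $B_c$ (on the blue curve $b_c \in \cS$); since the chords are pairwise disjoint and their interiors share a face, their endpoints form a non-crossing bipartite matching along the component. I would then pass to the cyclic sub-sequence consisting of the red-curve labels read at the red endpoints (of length equal to the number of chords on this component) and the analogous one for blue. As in Lemma~\ref{lem:C_1}, Proposition~\ref{prop:diff-consecutive} yields a sub-sequence of length at least half the original in which no two consecutive entries coincide.

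The heart of the argument is showing that at least one of these two sub-sequences avoids the $abab$ pattern. Mirroring Proposition~\ref{prop:no-ABAB-on-boundary}, I would suppose that the red-at-red sub-sequence contained $r,r',r,r'$ at cyclic positions $R_i,R_j,R_{i'},R_{j'}$ with $r_i=r_{i'}=r$ and $r_j=r_{j'}=r'$, and derive a contradiction. The idea is to build a closed curve $\gamma$ from the sub-curve of $r$ between $R_i$ and $R_{i'}$, the chords $c_i,c_{i'}$, and a closing path routed through the interior of $F$ (possibly passing through $B_i$ and $B_{i'}$ along $\partial F$ or shortcutting between them inside $F$). By the cyclic pattern, $R_j$ and $R_{j'}$ lie on opposite sides of $\gamma$, so the curve $\alpha_j$ assembled from $c_j$, a sub-curve of $r'$, and $c_{j'}$ must cross $\gamma$. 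But the pairwise disjointness of chords in $\cC_2$, together with the disjointness of same-color curves in $\cS$, forbids any such crossing, a contradiction. Lemma~\ref{lem:DS} then bounds the $abab$-free sub-sequence by $2n-1$.

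Combining the Davenport--Schinzel bound on the two sub-sequences (red-at-red and blue-at-blue), the factor-of-two loss from Proposition~\ref{prop:diff-consecutive}, the planarity bound on cross-component chords, and summing over all connected components of $\partial F$, yields the desired inequality $|\cC_2|\le 22n-18$. The main obstacle I anticipate is the Jordan-curve separation argument for the $abab$-free claim: because each chord in $\cC_2$ has both endpoints on $\partial F$ (rather than one on $\partial F$ and one in the interior, as in $\cC_1$), the closed curve $\gamma$ must be assembled with an extra routing inside $F$ connecting $B_i$ and $B_{i'}$ while avoiding every other chord $c_j$, and the topological bookkeeping needed to carry this out carefully—together with having to run two separate DS arguments (one for each color)—is what inflates the constant relative to the bound of Lemma~\ref{lem:C_1}.
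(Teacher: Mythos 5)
There is a genuine gap, and it sits exactly at the step you flag as ``the main obstacle''. Your plan transplants the scheme of Lemma~\ref{lem:C_1}: two label sequences, Proposition~\ref{prop:diff-consecutive}, then an $abab$-free claim plus Lemma~\ref{lem:DS}. But Proposition~\ref{prop:diff-consecutive} needs the two sequences to be indexed by one common order of the chords (in Lemma~\ref{lem:C_1} each chord has a \emph{single} point on $\partial F$, which provides that order). Here each chord has two boundary points. If you order the chords by their red endpoints, the red-label sequence is indeed $abab$-free by the argument of Proposition~\ref{prop:no-ABAB-on-boundary}; the problem is the blue-label sequence in that same (red-endpoint) order. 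Your closed curve $\gamma$ for it must be closed by a path through the interior of $F$, and that path has to avoid the chords $c_j,c_{j'}$, whose interiors now lie \emph{inside} $F$ (unlike in $\cC_1$, where the chords' interiors avoid the interior of $F$); pairwise disjoint chords with both endpoints on $\partial F$ can separate $R_i$ from $R_{i'}$ inside $F$, so no such routing exists in general. Worse, the claim itself fails: one can place four pairwise disjoint (non-crossing) chords of a face whose red endpoints appear in cyclic order $R_i,R_j,R_{i'},R_{j'}$ while $B_i,B_{i'}$ lie on one blue curve and $B_j,B_{j'}$ on another, so the blue labels read $b,b',b,b'$ with no consecutive repetitions. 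Taking instead ``red labels in red-endpoint order'' and ``blue labels in blue-endpoint order'' (each of which \emph{is} $abab$-free) does not help, because these are differently ordered and the hypothesis of Proposition~\ref{prop:diff-consecutive} (never both labels repeat at the same consecutive pair) no longer comes from the distinctness of the pairs $(r_c,b_c)$. This is precisely why the paper does not run Davenport--Schinzel on chord labels for $\cC_2$: it runs it on the boundary itself, partitioning the edges of the component into \emph{alternating runs}, bounding the number $m$ of runs by $2|\cS''|-1$ via Lemma~\ref{lem:DS}, and then bounding the chords between different runs by a random red/blue choice per run combined with Lemma~\ref{lem:planar} (and trivially the at most $m$ chords within runs).

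A secondary issue: for chords joining different components of $\partial F$, contracting each component to a vertex only gives a planar \emph{multigraph}, and distinctness of the pairs $(r_c,b_c)$ does not prevent many parallel edges between the same two components (many red curves on one component against many blue curves on the other). The paper instead chooses a color uniformly at random for each component, deletes the curves of that color there, and applies Lemma~\ref{lem:planar} to the surviving pairwise disjoint curves, which is what yields $|\cC_2'|\le 4n$; some argument of this kind is needed in place of the contraction. Together with the missing derivation of the explicit constant, these gaps mean the proposal does not establish $|\cC_2|\le 22n-18$.
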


\begin{proof}
Recall that the boundary of $F$ may consist of several connected components.
We consider first the curves $\cC'_2 \subseteq \cC_2$ which connect edges of $\cA_{\cS}$ that belong to different connected components of the boundary of $F$.

For each connected component choose either `red' or `blue' uniformly at random.
If `red' (resp., `blue') is chosen for a certain connected component, then we delete all the red (resp., blue) curves that contain edges of this connected component.
Every curve in $\cC'_2$ survives (with probability $1/4$) if both curves that contain its endpoints survive.
Since red and blue curves from different components do not intersect, it follows from Lemma~\ref{lem:planar} that the surviving curves define a bipartite plane graph whose vertices correspond to surviving red and blue curves and its edges correspond to surviving curves in $\cC'_2$.
The expected number of vertices of this graph is at most $n/2$ and the expected number of edges of this graph is $|\cC'_2|/4$.
Thus we have $|\cC'_2|/4 \le 2\cdot n/2$ and hence  $|\cC'_2| \le 4n$.

It remains to bound the number of curves in $\cC_2$ that connect edges of $\cA_{\cS}$ that belong to the same connected component of the boundary of $F$.
Let $\cC''_2 \subseteq \cC_2$ be such curves for a certain connected component and let $\cS'' \subseteq \cS$ be the set of curves that contain edges of this component.
Note that it is enough to show that $|\cC''_2| = O(|\cS''|)$, since then by summing for every connected component of the boundary of $F$ we get $|\cC_2| = O(|\cS|)$.

Denote the edges of the certain connected component of the boundary of $F$ that we consider by $e_1,e_2,\ldots,e_{k}$, and let $s_1,s_2,\ldots,s_k$ be the corresponding curves (with possible repetitions).
We partition $e_1,\ldots,e_{k}$ into \emph{alternating runs}:
The first alternating run is the maximal sequence of edges $e_{1},\ldots,e_i$ that belong to exactly two curves (necessarily a blue curve and a red curve).
The next alternating run is the maximal sequence of edges $e_{i+1},e_{i+2},\ldots,e_j$ that belong to exactly two curves, and so on and so forth.
Let $m$ denote the number of alternating runs.

\begin{prop}
$m \le 2|\cS''|-1$.
\end{prop}

\begin{proof}
Let $S_1$ be the sequence we get by starting with $s_1$ and adding $s_{2i+1}$ if it is different from $s_{2i-1}$, for $i=1,2,\ldots,\lfloor k/2 \rfloor$.
Similarly, let $S_2$ be the sequence we get by starting with $s_2$ and adding $s_{2i}$ if it is different from $s_{2i-2}$, for $i=1,2,\ldots,\lfloor k/2 \rfloor$.
By definition, every element in these sequences is different from its preceding element. 
Furthermore, after each alternating run an element is added to one of $S_1$ and $S_2$, thus their total length is at least $m$.
On the other hand, as in the proof of Proposition~\ref{prop:no-ABAB-on-boundary} we may conclude that none of $S_1$ and $S_2$ contains a sub-sequence of the form $a,b,a,b$.
Therefore, by Lemma~\ref{lem:DS} the total length of $S_1$ and $S_2$ is at most $2|\cS''|-1$. Thus, $m \le 2|\cS''|-1$.
\end{proof}

Clearly, $\cC''_2$ contains at most $m$ curves that connect two edges which belong to the same alternating run. 
Next, we bound the number of remaining curves $\cC'''_2$, that is, those connecting edges from different alternating runs.
To this end, for each alternating run choose either `red' or `blue' uniformly at random.
If `red' (resp., `blue') is chosen for a certain alternating run, then we delete all red (resp., blue) edges of that run and the curves in $\cC'''_2$ that have an endpoint on one of them.
Thus, every curve in $\cC'''_2$ survives with probability $1/4$.

Consider the graph such that each of its vertices is the union of the remaining edges of a certain alternating run and whose edges correspond to surviving curves in $\cC'''_2$.
By Lemma~\ref{lem:planar} it is a planar (bipartite) graph.
Since the expected number of surviving curves in $\cC'''_2$ is $|\cC'''_2|/4$, we conclude that $|\cC''_2| \le 4\cdot 2m + m = 9m \le 18|\cS''|-18$.

By summing for every connected component of the boundary of $F$ and recalling that $|\cC'_2| \le 4n$, we have $|\cC_2| \le 22n-18$.
\end{proof}

From Lemma~\ref{lem:C_1} and Lemma~\ref{lem:C_2} we have $|\cC| \le 3n-3+8n-4+22n-18 = 33n-25$.
This concludes the proof of Theorem~\ref{thm:groundedcurves}.

\section{Bounding the number of hyperedges}\label{sec:general-bounds}

In this section we recall how linear upper bounds on the size of  Delaunay-graphs of induced sub-hypergraphs of a hypergraph $\cH$ imply upper bounds on its chromatic number, VC-dimension and number of hyperedges of size at most $k$.

\begin{proof}[Proof of Theorem \ref{thm:2unif-kunif}~(i)]
	Let $\cH$ be a hypergraph as assumed. Then the average degree of the Delaunay-graph of every induced sub-hypergraph of $\cH$ is at most $2c_1$ (and strictly less than $2c_1$ if $c_2>0$) and thus it has a vertex of that degree or smaller. 
	Now we can easily get a proper $(2c_1+1)$-coloring (even a $2c_1$-coloring if $c_2>0$) by induction. 
	Remove a vertex $v$ with the smallest degree in the Delaunay-graph of $\cH$ and let $\cH'$ be the hypergraph which is induced by the remaining vertices. As $\cH'$ still has the above property, by induction it has a proper $(2c_1+1)$-coloring (even a $2c_1$-coloring if $c_2>0$). Now color $v$ with a color that is different from the colors of all of its neighbors in the Delaunay-graph of $\cH$. We claim that this is a proper coloring of $\cH$. Indeed, if a hyperedge contains at least two vertices other than $v$, then it is non-monochromatic by induction, otherwise it contains exactly two vertices, one of them being $v$ and then it is non-monochromatic by the choice of color for $v$.
\end{proof}

\begin{proof}[Proof of Theorem \ref{thm:2unif-kunif}~(ii)]
	Suppose that that the VC-dimesion of $\cH=(\cV,\cE)$ is $d$. 
	Then there exists a subset $\cV' \subseteq \cV$ of size $d$ such that it induces a hypergraph containing all the subsets of $\cV'$. In particular, it contains the $d\choose 2$ hyperedges of size two, and thus ${d\choose 2}\le c_1d-c_2$, by the properties of $\cH$. Thus $d\le \frac{(2c_1+1)+\sqrt{(2c_1+1)^2-8c_2}}{2}$. Since $d$ is an integer, we have $d\le 2c_1+1$ and $d\le 2c_1$ if $c_2>0$.
\end{proof}

The proof of part (iii) is more involved, but it does not require new ideas; we follow standard techniques and the proofs in~\cite{aronov21} and~\cite{buzaglo} (almost verbatim).

For a hypergraph $\cH=(\cV,\cE)$ and an integer $k \ge 2$ we denote by $\cE_k$ (resp., $\cE_{\le k}$) the set of hyperedges whose size is exactly (resp., at most) $k$.
We say that $\cH$ is \emph{$c$-linear} for some constant $c$, if $|\cE'_2| \le c|\cV'|$ for every induced sub-hypergraph $(\cV',\cE')$.
We first consider an upper bound for $|\cE_{\le k}|$, following the methods in~\cite{aronov21,buzaglo}.

For $k \ge 2$, a pair of vertices of a hypergraph $\cH$ is \emph{$k$-good} if there exists a hyperedge of size at most $k$ in $\cH$ which contains both vertices.

\begin{lem}\label{lem:k-good}
	Let $\cH$ be an $n$-vertex $c$-linear hypergraph for some constant $c$. Then for every $k \ge 2$ there are at most $neck$ $k$-good pairs in $\cH$.\footnote{The $e$ in $neck$ stands for Euler's number.}
\end{lem}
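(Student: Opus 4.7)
The plan is to use a Clarkson--Shor style random sampling argument, relating the number of $k$-good pairs in $\cH$ to the expected number of size-$2$ hyperedges in a randomly chosen induced sub-hypergraph, for which $c$-linearity provides a linear upper bound.

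Concretely, I would let $\cV'$ be the random subset of $\cV$ obtained by including each vertex independently with probability $p$, to be chosen later, and denote the induced sub-hypergraph by $(\cV',\cE')$. For each $k$-good pair $\{u,v\}$, fix an arbitrary witnessing hyperedge $h_{uv}\in \cE$ of size at most $k$ containing both $u$ and $v$. The event $E_{uv}$ that $u,v\in \cV'$ while no other vertex of $h_{uv}$ lies in $\cV'$ has probability at least $p^2(1-p)^{k-2}$, since $|h_{uv}\setminus\{u,v\}|\le k-2$. Whenever $E_{uv}$ occurs, $h_{uv}\cap \cV' = \{u,v\}$, and hence $\{u,v\}$ is a size-$2$ hyperedge in $\cE'$. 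Summing indicators over the $g$ $k$-good pairs yields
\[
\mathbb{E}[|\cE'_2|] \;\ge\; g\cdot p^{2}(1-p)^{k-2}.
\]

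In the opposite direction, the $c$-linearity of $\cH$ guarantees $|\cE'_2|\le c|\cV'|$ in every realization, so $\mathbb{E}[|\cE'_2|]\le c\,\mathbb{E}[|\cV'|] = cnp$. Combining the two inequalities and solving for $g$ gives
\[
g \;\le\; \frac{cn}{p(1-p)^{k-2}}.
\]
Choosing $p = 1/k$ and invoking the elementary inequality $(1-1/k)^{k-2}\ge 1/e$, which holds for every $k\ge 2$ because the left-hand side is decreasing in $k$ with limit $1/e$, then produces $g \le ecnk$, as claimed.

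The only step that requires a little care is justifying the lower bound on $\mathbb{E}[|\cE'_2|]$: the indicator that a $k$-good pair $\{u,v\}$ becomes a size-$2$ hyperedge of $\cE'$ dominates the indicator of $E_{uv}$ via the chosen witness $h_{uv}$, and although the events $E_{uv}$ for distinct pairs are correlated (they may even exclude each other when the witnesses coincide), linearity of expectation makes the summation go through without a union bound. I do not foresee any further obstacle.
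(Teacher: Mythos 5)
Your proposal is correct and follows the same Clarkson--Shor style sampling argument as the paper: keep each vertex independently with probability $1/k$, lower-bound the expected number of size-two hyperedges by $g\cdot k^{-2}(1-1/k)^{k-2}$ via a witness hyperedge for each $k$-good pair, upper-bound it by $cn/k$ using $c$-linearity, and conclude $g\le neck$. The only difference is that you spell out the witness/trace justification that the paper leaves implicit.
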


\begin{proof}
Set $q=1/k$ and remove every vertex of $\cH$ independently with probability $1-q$ to get a vertex set $\cV'$ which induces a sub-hypergraph $\cH'=(\cV',\cE')$. A $k$-good pair of $\cH$ becomes a size-two hyperedge of $\cH'$ with probability at least $q^2(1-q)^{k-2}$. On the other hand $|\cE'_2| \le c|\cV'|$ since $\cH$ is $c$-linear. Let $g$ denote the number of $k$-good pairs of $\cH$. Then $gq^2 (1-q)^{k-2}\le \mathbb{E}[|\cE'_2|] \le c\mathbb{E}[|\cV'|]=cqn$.
	Thus, we get $g\le \frac{cn}{\frac1k(1-\frac1k)^{k-2}}\le neck$, as required.
\end{proof}

\begin{lem}[\cite{buzaglo}]\label{lem:Kn}
	Let $G$ be an $n$-vertex $c$-linear graph for some constant $c$.
	Then, for every $h \ge 2$, the number of copies of $K_h$ (the complete graph on $h$ vertices) in $G$ is at most $t_{c,h}\,n$, where $t_{c,h} = \frac{ (2c)^{h-1}}{h!}$.
\end{lem}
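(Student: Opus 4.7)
The plan is to prove Lemma~\ref{lem:Kn} by induction on $h$, with a double-counting step at each inductive step that naturally produces the factor $1/(h+1)$ needed to bring $t_{c,h}$ down to $t_{c,h+1}=\frac{2c}{h+1}t_{c,h}$.

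For the base case $h=2$, a copy of $K_2$ is just an edge. Applying $c$-linearity to $\cV'=V(G)$ gives $|E(G)|\le cn$, which equals $t_{c,2}\,n$ since $t_{c,2}=\frac{2c}{2!}=c$.

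For the inductive step, assume the bound holds for $h$ and let $N_{h+1}$ denote the number of copies of $K_{h+1}$ in $G$. I would double-count pairs $(v,K)$ where $K$ is a copy of $K_{h+1}$ in $G$ and $v$ is one of its vertices. Counting by $K$ first gives $(h+1)N_{h+1}$. Counting by $v$ first, the number of copies of $K_{h+1}$ through a fixed vertex $v$ is exactly the number of copies of $K_h$ inside the induced subgraph $G[N(v)]$ on its neighborhood. The key observation is that $c$-linearity is hereditary: every induced subgraph of a $c$-linear graph is again $c$-linear, since the definition already quantifies over all $\cV'\subseteq\cV$. Thus $G[N(v)]$ is a $c$-linear graph on $d(v)$ vertices, and by the inductive hypothesis it contains at most $t_{c,h}\,d(v)$ copies of $K_h$.

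Combining the two counts and using the handshake lemma together with $c$-linearity of $G$ itself gives
\[
(h+1)N_{h+1}\;\le\;\sum_{v\in V(G)} t_{c,h}\,d(v)\;=\;2\,t_{c,h}\,|E(G)|\;\le\;2cn\,t_{c,h},
\]
so
\[
N_{h+1}\;\le\;\frac{2c\,t_{c,h}}{h+1}\,n\;=\;\frac{2c}{h+1}\cdot\frac{(2c)^{h-1}}{h!}\,n\;=\;\frac{(2c)^{h}}{(h+1)!}\,n\;=\;t_{c,h+1}\,n,
\]
which closes the induction.

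I don't expect any real obstacles; the argument is a standard neighborhood-localization scheme (essentially the one used in \cite{buzaglo}). The only subtlety worth stating carefully is that $c$-linearity really must be a hereditary property for the induction to pass to $G[N(v)]$, and that the crucial $1/(h+1)$ factor only appears if one counts incidences between vertices and $(h+1)$-cliques rather than, say, deleting a minimum-degree vertex (the latter would lose the factor and give a weaker bound of the form $(2c)^{h}/h!\cdot n$).
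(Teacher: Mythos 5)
Your proof is correct: the base case, the heredity of $c$-linearity under taking induced subgraphs, and the double count of vertex--clique incidences (giving $(h+1)N_{h+1}\le 2t_{c,h}|E(G)|\le 2c\,t_{c,h}\,n$) all check out and yield exactly $t_{c,h+1}=\frac{(2c)^h}{(h+1)!}$. Note that the paper itself offers no proof of this lemma --- it is imported from~\cite{buzaglo} --- and your neighborhood-localization induction is essentially the standard argument of that source, so there is nothing substantive to reconcile.
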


\begin{cor}\label{cor:bounded}
	If $\cH=(\cV,\cE)$ is a $c$-linear hypergraph, then $|\cE_{\le k}| \le b_{c,k}\, n$, where $b_{c,k}$ depends only on $c$ and $k$.
\end{cor}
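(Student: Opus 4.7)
}
The plan is to reduce the counting of hyperedges of size at most $k$ to counting cliques in an auxiliary graph, for which Lemma~\ref{lem:Kn} will directly apply. For each $k\ge 2$ define the \emph{$k$-good graph} $G_k=(\cV,E_k)$, where $\{u,v\}\in E_k$ iff $\{u,v\}$ is a $k$-good pair of $\cH$. Any hyperedge $h\in\cE$ with $|h|=j\in\{2,\dots,k\}$ spans a clique $K_j$ on its vertex set in $G_k$, and distinct hyperedges correspond to distinct vertex sets (by the convention that a hyperedge appears only once in $\cE$), so distinct cliques. Therefore
\[
|\cE_j|\ \le\ \#\{K_j\text{ in }G_k\}\qquad \text{for every }2\le j\le k.
\]

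The key step is to show that $G_k$ is $(eck)$-linear, so that Lemma~\ref{lem:Kn} applies with the constant $c':=eck$. For any $\cV'\subseteq\cV$, let $\cH'=(\cV',\cE')$ be the sub-hypergraph of $\cH$ induced by $\cV'$. Note that $\cH'$ is itself $c$-linear, because an induced sub-hypergraph of $\cH'$ is also an induced sub-hypergraph of $\cH$. I claim that every edge of $G_k[\cV']$ is a $k$-good pair of $\cH'$. Indeed, if $u,v\in\cV'$ and $h\in\cE$ with $|h|\le k$ contains both $u$ and $v$, then $h\cap\cV'$ is a hyperedge of $\cH'$ of size at most $k$ containing both $u,v$, so $\{u,v\}$ is a $k$-good pair of $\cH'$. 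Applying Lemma~\ref{lem:k-good} to $\cH'$ then yields $|E(G_k[\cV'])|\le eck\cdot|\cV'|$, proving that $G_k$ is $(eck)$-linear, as required.

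Now apply Lemma~\ref{lem:Kn} to $G_k$: for every $2\le j\le k$ there are at most $t_{eck,j}\cdot n$ copies of $K_j$ in $G_k$, hence $|\cE_j|\le t_{eck,j}\, n$. Summing, and using the trivial bound $|\cE_{\le 1}|\le n+1$ for hyperedges of size $0$ or $1$,
\[
|\cE_{\le k}|\ \le\ n+1+\sum_{j=2}^{k}t_{eck,j}\, n\ \le\ b_{c,k}\,n,
\]
with $b_{c,k}:=2+\sum_{j=2}^{k}t_{eck,j}$ depending only on $c$ and $k$, as required.

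The only delicate point is the linearity of $G_k$, and specifically the direction of inclusion between $k$-good pairs of $\cH$ restricted to $\cV'$ and those of the induced sub-hypergraph $\cH'$. The above argument uses the (easier) inclusion: a hyperedge of $\cH$ of size $\le k$ restricts to a hyperedge of $\cH'$ of size $\le k$. The reverse inclusion would fail in general (a hyperedge of $\cE$ of size larger than $k$ might restrict to size $\le k$), but that direction is not needed here, because we only need to upper-bound the edge count of $G_k[\cV']$ by the number of $k$-good pairs of $\cH'$.
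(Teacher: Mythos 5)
Your proposal is correct and follows essentially the same route as the paper: build the graph on $\cV$ whose edges are the $k$-good pairs, deduce that it is $(eck)$-linear from Lemma~\ref{lem:k-good} (applied to induced sub-hypergraphs, which you spell out in more detail than the paper does), and then count cliques via Lemma~\ref{lem:Kn}. The only differences are bookkeeping (your explicit handling of hyperedges of size at most $1$ and the resulting slightly different constant $b_{c,k}$), which do not affect correctness.
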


\begin{proof}
	Define a graph $G$ whose vertex set is $\cV$ such that there is an edge between each pair of $k$-good vertices. It follows from Lemma \ref{lem:k-good} that $G$ is $(eck)$-linear. 
	By Lemma~\ref{lem:Kn} for every $2 \le h \le k$ the number of copies of $K_h$ in $G$ is at most $\frac{(2eck)^{h-1}}{h!}\,n$.
	Therefore $|\cE_{\le k}| \le b_{c,k}\,n$ where $b_{c,k} = 1+ \sum_{h=2}^k \frac{(2eck)^{h-1}}{h!}$.
\end{proof}

Note that the constant $b_{c,k}$ in Corollary~\ref{cor:bounded} is huge.
Next, we will use this bound to obtain an upper bound of the form $|\cE_{\le k}| \le O_{c,d}(k^{d-1}n)$, where $d$ is the VC-dimension of $\cH$ (often quite small).

The following is a well-known property of hypergraphs of bounded VC-dimensions, a stronger form of the Sauer-Shelah-lemma, used also in Buzaglo et al.~\cite{buzaglo}.

\begin{thm}\label{thm:vcinj}
	Let $\cH$ be a hypergraph with VC-dimension $d$. Then it is possible to assign to each hyperedge a subset of at most $d$ of its vertices, such that distinct hyperedges are assigned distinct subsets.
\end{thm}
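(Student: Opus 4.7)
The plan is to prove Theorem~\ref{thm:vcinj} by the classical down-shift (compression) technique of Frankl. For each $v \in \cV$, define the shift operator $S_v$ on $\cE$ by
\[
S_v(e) = \begin{cases} e \setminus \{v\} & \text{if } v \in e \text{ and } e \setminus \{v\} \notin \cE, \\ e & \text{otherwise}, \end{cases}
\]
and set $S_v(\cE) = \{S_v(e) : e \in \cE\}$. First I would verify three standard properties of this operator: (a) $S_v$ is injective on $\cE$ (by a short case analysis on whether each of two distinct sets has $v$ removed), so $|S_v(\cE)| = |\cE|$; (b) $S_v(e) \subseteq e$ for every $e \in \cE$, immediate from the definition; and (c) the VC-dimension of $S_v(\cE)$ is at most that of $\cE$.

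Property (c) is the delicate point. To establish it, I would show that any $A \subseteq \cV$ shattered by $S_v(\cE)$ is already shattered by $\cE$. The case $v \notin A$ is immediate, since a shift does not affect the trace on $A$ in that situation. When $v \in A$, for every $B \subseteq A$ with $v \notin B$ I would lift the two traces $B$ and $B \cup \{v\}$ back to $\cE$: the set $f \in \cE$ whose image $S_v(f)$ realizes the trace $B \cup \{v\}$ must contain $v$, and because $S_v$ did \emph{not} remove $v$ from $f$, the set $f \setminus \{v\}$ must also lie in $\cE$, thereby realizing the trace $B$ in the original family.

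Next I would iterate. Each application of any $S_v$ either fixes the family or strictly decreases $\sum_{e \in \cE} |e|$, so cycling through the vertices terminates in a family $\cE^*$ fixed by every $S_v$, with $|\cE^*| = |\cE|$. The composition $\Phi \colon \cE \to \cE^*$ of all the shifts is an injection (composition of injections) and satisfies $\Phi(e) \subseteq e$ for every $e \in \cE$, while the VC-dimension of $\cE^*$ remains at most $d$. The invariance of $\cE^*$ under every $S_v$ is equivalent to saying $\cE^*$ is \emph{downward closed}: if $e \in \cE^*$ and $v \in e$, then $S_v(e) = e$ forces $e \setminus \{v\} \in \cE^*$, and iterating gives $e' \in \cE^*$ for every $e' \subseteq e$.

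To conclude, I would observe that in a downward closed family every $e \in \cE^*$ is itself shattered, since every subset of $e$ lies in $\cE^*$ and therefore appears as a trace on $e$. Hence VC-dimension at most $d$ forces $|e| \le d$ for every $e \in \cE^*$, and $\Phi$ is the required injective assignment of a subset of size at most $d$ to each hyperedge, with $\Phi(e) \subseteq e$. The main obstacle is the case analysis in property (c); once that is in place, the iteration and downward-closure argument are routine.
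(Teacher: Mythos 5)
Your proposal is correct. Note that the paper does not actually prove Theorem~\ref{thm:vcinj}: it is stated as a well-known strengthening of the Sauer--Shelah lemma (with a pointer to its use in Buzaglo et al.), so there is no in-paper argument to compare against. Your down-shifting proof is the standard one for exactly this statement (in the style of Frankl's compression technique): injectivity of $S_v$, the containment $S_v(e)\subseteq e$, and the preservation of VC-dimension are all argued correctly, and the delicate step (c) is handled the right way --- when $v\in A$ and some $f\in\cE$ has $S_v(f)\cap A=B\cup\{v\}$, the fact that $v$ survived the shift forces $f\setminus\{v\}\in\cE$, so both traces $B\cup\{v\}$ and $B$ lift to $\cE$, and since every subset of $A$ has one of these two forms, $A$ is shattered by $\cE$. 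The termination argument via the strictly decreasing quantity $\sum_{e\in\cE}|e|$, the downward closure of the fixed family $\cE^*$, and the conclusion that every member of a downward closed family of VC-dimension at most $d$ has size at most $d$ are all sound. The only implicit hypothesis is finiteness of the vertex and edge sets (needed for termination and for $\sum_{e\in\cE}|e|$ to be finite), which is harmless here since all hypergraphs in the paper are finite; if you want the statement verbatim for arbitrary hypergraphs you would need to say so or use a compactness/induction variant of the argument.
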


\begin{proof}[Proof of Theorem \ref{thm:2unif-kunif}~(iii)]
Let $\cH=(\cV,\cE)$ be an $n$-vertex $c$-linear hypergraph with VC-dimension at most $d$ and let $k \ge 2$ be an integer.
We wish to show that $|\cE_{\le k}| \le O_{c}(k^{d-1}n)$ using an argument similar to the proof of Lemma \ref{lem:k-good}.

By Theorem \ref{thm:vcinj} we can assign to every hyperedge $h$ of $\cH$ a signature $h'\subseteq h$ of size at most $d$.
Set $q=1/k$ and remove every vertex of $\cH$ independently with probability $1-q$ to get a vertex set $\cV'$ that induces a sub-hypergraph $\cH'=(\cV',\cE')$. We say that a hyperedge $h \in \cE_{\le k}$ \emph{survives} if $h\cap \cV'=h'$, where $h'$ is the signature of $h$.
Observe that a hyperedge $h$ (of size at most $k$) survives with probability 
$$q^{|h'|}(1-q)^{|h|-|h'|}\ge q^{|h'|}(1-q)^{k-|h'|}\ge q^{d}(1-q)^{k-d},$$
where the first inequality holds since $|h| \le k$ and the second inequality holds since $q \le 1-q$ and $|h'| \le d$.

By Corollary~\ref{cor:bounded} we have $|\cE'_{\le d}| \le b_{c,d}|\cV'|$, where $b_{c,d}$ is the constant from the corollary. 
Therefore, 
$$q^{d}(1-q)^{k-d} |\cE_{\le k}| \le \mathbb{E}[|\cE'_{\le d}|] \le b_{c,d}\, \mathbb{E}[|\cV'|]=b_{c,d}qn.$$
This implies that $|\cE_{\le k}| \le b_{c,d}(1-q)^{d-k}q^{1-d}n \le b_{c,d}\,k^{d-1}\,n$.
Since $d \le 2c+1$ by Theorem \ref{thm:2unif-kunif}~(ii) we have that the number of hyperedges of size at most $k$ is $O(k^{d-1}n)$ where the constant hiding in the big-$O$ notation depends only on $c$.
\end{proof}

\paragraph{Remarks.}
Theorem~\ref{thm:groundedcurves} and Theorem~\ref{thm:2unif-kunif} together imply Corollary~\ref{cor:curves-groundedcurves}. Observe also that Lemma~\ref{lem:keller-better} and Theorem~\ref{thm:2unif-kunif} imply that there exists a proper $26$-coloring of $\cH(\cS,\cC)$,
for every family $\cS$ of $n$ axis-parallel segments and a family $\cC$ of pairwise disjoint curves.
Furthermore, this hypergraph has VC-dimension at most $26$ and has $(k^{25}n)$ hyperedges of size at most $k$.

However, it is easy to get better upper bounds on the chromatic number and the VC-dimension of these hypergraphs, even when $\cS$ consists of red and blue curves instead of segments.
Indeed, the VC-dimension is at most $8$, since a shattered set of $9$ vertices would imply $5=\lceil \frac92\rceil$ shattered curves of the same color, which would give a plane drawing of $K_{5}$. 
Furthermore, recall that the Delaunay-graph of the red (resp., blue) curves with respect to the curves in $\cC$ is planar, and therefore $8$ colors suffice for coloring the red and blue curves such that the corresponding hypergraph is properly colored (four different colors are used for each of the two).

\subsubsection*{Acknowledgement}

We thank Chaya Keller for pointing out that Theorem \ref{thm:2unif-kunif} also implies Theorem \ref{thm:hittingsetfromdel}.

\footnotesize
\bibliographystyle{plainurl}
\bibliography{psdisk}

\end{document}